\documentclass[12pt,a4paper]{amsart}
\usepackage[headings]{fullpage}
\usepackage{amssymb}
\usepackage{amsthm}

\newcommand{\Qp}{\mathbf{Q}_p}
\newcommand{\Cp}{\mathbf{C}_p}
\newcommand{\Zp}{\mathbf{Z}_p}
\newcommand{\ZZ}{\mathbf{Z}}
\newcommand{\NN}{\mathbf{N}}
\newcommand{\QQ}{\mathbf{Q}}
\newcommand{\kbf}{\mathbf{k}}

\newcommand{\Gm}{\mathbf{G}_\mathrm{m}}
\newcommand{\nequiv}{\not\equiv}
\newcommand{\MM}{\mathfrak{m}}
\newcommand{\val}{\operatorname{val}}
\newcommand{\Hom}{\operatorname{Hom}}
\newcommand{\vp}{\val_p}
\newcommand{\dcroc}[1]{[\![ #1 ]\!]}
\newcommand{\LT}{\mathrm{LT}}

\theoremstyle{plain}

\newtheorem{theo}{Theorem}[section]
\newtheorem{coro}[theo]{Corollary}
\newtheorem{lemm}[theo]{Lemma}
\newtheorem{prop}[theo]{Proposition}
\newtheorem{defi}[theo]{Definition}

\newtheorem*{theoA}{Theorem A}

\begin{document}

\title{$p$-adic Fourier theory for $\mathbf{Q}_{p^2}$ and the Monna map}

\author{Konstantin Ardakov}
\address{Konstantin Ardakov \\ 
Mathematical Institute \\ 
University of Oxford}
\email{ardakov@maths.ox.ac.uk}
\urladdr{http://people.maths.ox.ac.uk/ardakov/}

\author{Laurent Berger}
\address{Laurent Berger \\ 
UMPA, ENS de Lyon \\ 
UMR 5669 du CNRS}
\email{laurent.berger@ens-lyon.fr}
\urladdr{https://perso.ens-lyon.fr/laurent.berger/}

\begin{abstract}
We show that the coefficients of a power series occurring in $p$-adic Fourier theory for $\QQ_{p^2}$ have valuations that are given by an intriguing formula.
\end{abstract}

\date{\today}

\maketitle

\setlength{\baselineskip}{18pt}

\section*{Introduction}

Let $L$ be a finite extension of $\Qp$, let $\pi$ be a uniformizer of $o_L$ and let $\LT$ be the Lubin-Tate formal $o_L$-module attached to $\pi$. The formal group maps over $o_{\Cp}$ from $\LT$ to $\Gm$ play an important role in $p$-adic Fourier theory (see \cite{ST}). Choose a coordinate $Z$ on $\LT$, and let $G(Z) \in o_{\Cp} \dcroc{Z}$ be a generator of $\Hom_{o_{\Cp}}(\LT,\Gm)$, so that 
\[ G(Z) = \sum_{k \geq 1} P_k(\Omega) \cdot Z^k = \exp(\Omega \cdot \log_{\LT}(Z))-1 \] for a certain element $\Omega \in o_{\Cp}$ and polynomials $P_k(Y) \in L[Y]$. We have (\S 3 of \cite{ST}) $\vp(\Omega) = 1/(p-1)-1/e(q-1)$ where $e$ is the ramification index of $L$ and $q = |{o_L/\pi o_L}|$. The power series $G(Z)$ gives rise to a function on $\MM_{\Cp}$ and the theory of Newton polygons then allows us to compute the valuation of $P_k(\Omega)$ for $k=q^j/p^{\lfloor (j-1)/e \rfloor+1}$ with $j \geq 0$ (Theorem 1.5.2 of \cite{AB24}). However, the valuation of $P_k(\Omega)$ for most $k \geq 2$ has no geometric significance and depends on the choice of the coordinate $Z$. 

During our work on the character variety, we computed the valuation of $P_k(\Omega)$ for many small values of $k$ in a special case: we took $L=\QQ_{p^2}$ and $\pi=p$ and chose a coordinate $Z$ on $\LT$ for which $\log_{\LT}(Z) = \sum_{m \geq 0} Z^{q^m}/p^m$ (this is possible by \S 8.3 of \cite{Haz}). Note that in this setting, the theory of Newton polygons gives $\vp(P_k(\Omega))$ precisely when $k$ is a power of $p$. Let $w : \ZZ_{\geq 0} \to \QQ$ be the map defined by  \[ w(k) =  \frac{p}{q-1} \cdot (k_0+p^{-1} k_1 + \cdots + p^{-h} \cdot k_h) \text{ if $k=(k_h \cdots k_0)_p$ in base $p$.} \]
For all $k$ for which we were able to compute $\vp(P_k(\Omega))$, we found that $\vp(P_k(\Omega)) = w(k)$. The main result of this note is that this formula holds for all $k$.
\begin{theoA}
For all $k \geq 1$, we have $\vp(P_k(\Omega)) = w(k)$.
\end{theoA}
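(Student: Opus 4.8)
The plan is to turn the defining identity into a $p$-adic recursion and then push it through by a delicate induction. Write $P_k:=P_k(\Omega)$ and recall that here $q=p^2$, $e=1$, $\vp(\Omega)=p/(q-1)$. The chosen coordinate satisfies $\log_{\LT}(Z)=Z+p^{-1}\log_{\LT}(Z^q)$, so $\log(1+G(Z))=\Omega\log_{\LT}(Z)$ gives the functional equation $(1+G(Z))^p=\exp(p\Omega Z)\,(1+G(Z^q))$; differentiating $\log(1+G)=\Omega\log_{\LT}$ and using $\log_{\LT}'(Z)=\sum_{m\ge0}p^m Z^{q^m-1}$, then comparing coefficients of $Z^N$, yields
\[ N\,P_N \;=\; \Omega\sum_{m\ge 0,\ q^m\le N} p^m\,P_{N-q^m},\qquad P_0:=1. \]
Two families of starting values are available. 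First, $\log_{\LT}(Z)\equiv Z\pmod{Z^q}$ forces $P_k=\Omega^k/k!$ for $1\le k<q$, and Legendre's formula gives $\vp(\Omega^k/k!)=k\vp(\Omega)-(k-s_p(k))/(p-1)=w(k)$. Second, for $k$ a power of $p$ — which, as $e=1$, are precisely the exponents $q^j/p^{\lfloor(j-1)/e\rfloor+1}$ — Theorem 1.5.2 of \cite{AB24} computes $\vp(P_{p^j})$ from the Newton polygon of $G$, and one checks directly that it equals $w(p^j)$.

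The remaining valuations must be extracted from the recursion by strong induction on $N$, and this is where the real content lies. Granting $\vp(P_{N-q^m})=w(N-q^m)$ inductively, the ultrametric inequality only gives $\vp\!\bigl(\sum_m p^m P_{N-q^m}\bigr)\ge\min_m\bigl(m+w(N-q^m)\bigr)$, and a base-$p$ digit computation shows that $\vp(\Omega)+\min_m\bigl(m+w(N-q^m)\bigr)-\vp(N)=w(N)$ \emph{provided the minimum is attained for a single~$m$}. However it is attained several times for infinitely many $N$ — already for $N=q$ (the terms $P_{q-1}$ and $p\,P_0$ have equal valuation), and, less obviously, for many non prime powers such as $N=p^2(p+1)$ — and at these \emph{resonant} $N$ the bound above is \emph{strictly} smaller than $w(N)+\vp(N)-\vp(\Omega)$, so one must exhibit genuine cancellation and pin down the valuation of what survives. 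The prime-power inputs supply exactly the arithmetic that makes this work: the $N=q$ case forces $1+p\,\Omega^{q-1}/q!$ to have valuation $(p-1)/p$, and more generally $\vp(P_{p^{2j}})=w(p^{2j})$ encodes a congruence for the power $\Omega^{q^j-1}$ that governs the $j$-th layer of resonances.

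To carry the information needed, I would enlarge the induction hypothesis to record, alongside $\vp(P_{N'})=w(N')$, the \emph{leading coefficient} of $P_{N'}$ — its class modulo $\{x\in\Cp:\vp(x)>w(N')\}$, a nonzero element of a one-dimensional space over the residue field $\overline{\mathbf{F}}_p$ of $\Cp$ — together with an explicit formula for it in terms of the base-$p$ digits of $N'$. The recursion then expresses this leading coefficient for $N$ as a sum over the resonant $m$'s of the leading coefficients of $P_{N-q^m}$, weighted by residues of binomial and multinomial coefficients; evaluating those residues via Lucas' and Kummer's theorems and inserting the $\Omega$-congruences above, one checks that the outcome is the nonzero element predicted by the digit formula, which closes the induction. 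The main obstacle is precisely this last verification: because the valuations that occur have denominators $p^{h-1}(q-1)$ growing with the digit-length $h$ of $N$, there is no uniform ``next'' valuation, and the cancellation has to be controlled with the exact carry pattern of each subtraction $N\mapsto N-q^m$; I expect the crux to be a purely combinatorial identity asserting that the relevant weighted sum of multinomial residues equals, up to a unit in $\overline{\mathbf{F}}_p$, the digit expression defining $w(N)$, to be proved by a secondary induction on the blocks of base-$p$ digits of $N$.
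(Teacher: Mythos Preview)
Your proposal is candid about its own gap: the decisive ``purely combinatorial identity'' that would certify non-vanishing of the resonant sums is neither stated nor proved, so what you have is a plan, not a proof. It may be completable, but the paper takes a genuinely different and much cleaner route that avoids the leading-coefficient bookkeeping entirely, and does not need the Newton-polygon input for prime powers at all.

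The paper strengthens the induction hypothesis not by recording residues but by adjoining the single extra equality $\vp(u_{pn})=\tfrac{1}{p}\,w(n)$. The step then splits in two. First, writing $n=mp+i$ with $0\le i\le p-1$, one iterates your differential recursion $i$ times (this is the paper's Proposition~\ref{DivByu1}) to obtain a \emph{multiplicative} congruence $u_n\equiv\binom{n}{i}^{-1}u_{mp}\,u_i\bmod p$, sharpened to $\bmod\,p^2$ when $i=p-1$ (Proposition~\ref{Modp^2} and Lemma~\ref{LastZeta}); since $\vp(u_{mp})=\tfrac1p\,w(m)=w(mp)$ is already known inductively and $\vp(u_i)=w(i)$ for $i<p$, this yields $\vp(u_n)=w(mp)+w(i)=w(n)$ by a non-vanishing rather than a cancellation argument. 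Second, to close the loop and obtain $\vp(u_{pn})=\tfrac1p\,w(n)$, the paper uses precisely the functional equation you wrote down but then set aside: comparing the coefficient of $Z^{qn}$ in $(1+G(Z))^p=1+G([p]_{\LT}(Z))$ modulo $p^2$ gives $u_n\equiv u_{pn}^p$ up to error terms whose valuation exceeds $w(n)$, and bounding those errors needs only the subadditivity $w(a+b)\le w(a)+w(b)$, the uniform bound $w(n)<1+\tfrac{1}{q-1}$, and the crude estimate $\vp(u_k)>\tfrac1p$ for $p\nmid k$. The resonances you worry about simply do not arise in this organisation, and the $\Omega$-congruences you extract from the prime-power values are never invoked.
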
 
The proof involves a careful study of the functional equation that $G(Z)$ satisfies, and a direct computation of $\vp(P_k(\Omega))$ for small values of $k$. The function $w$ is related to the Monna map, defined in \cite{M52}.
 
\section{The polynomials $P_m(Y)$}

Let $L=\QQ_{p^2}$ and $\pi=p$, so that $q=p^2$, and choose a coordinate $Z$ on $\LT$ for which $\log_{\LT}(Z) = \sum_{k \geq 0} Z^{q^k}/p^k$. The polynomials $P_m(Y) \in L[Y]$ are given by \[ \exp(Y \cdot \log_{\LT}(Z)) = \sum_{m=0}^{+\infty} P_m(Y) \cdot Z^m. \] 

\begin{prop}
\label{prop11}
We have
\[ P_m(Y) = \sum_{m_0+qm_1+\cdots+q^d m_d=m} \frac{Y^{m_0+\cdots+m_d}}{m_0! \cdots m_d! \cdot p^{1 \cdot m_1 + 2 \cdot m_2 + \cdots + d \cdot m_d}} \]
\end{prop}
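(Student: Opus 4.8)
The plan is to expand $\exp(Y \cdot \log_{\LT}(Z))$ directly using the given formula for $\log_{\LT}(Z)$. First I would write $\log_{\LT}(Z) = \sum_{k \geq 0} Z^{q^k}/p^k$ and substitute this into the exponential series $\exp(T) = \sum_{n \geq 0} T^n/n!$. This gives
\[ \exp(Y \cdot \log_{\LT}(Z)) = \sum_{n \geq 0} \frac{Y^n}{n!} \left( \sum_{k \geq 0} \frac{Z^{q^k}}{p^k} \right)^n. \]
Next I would apply the multinomial theorem to expand the $n$-th power: $\left( \sum_{k \geq 0} Z^{q^k}/p^k \right)^n$ is a sum over tuples $(m_0, m_1, \ldots, m_d)$ of nonnegative integers with $m_0 + m_1 + \cdots + m_d = n$, each contributing $\frac{n!}{m_0! \cdots m_d!} \cdot \frac{Z^{m_0 + q m_1 + \cdots + q^d m_d}}{p^{0 \cdot m_0 + 1 \cdot m_1 + \cdots + d \cdot m_d}}$. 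The factor $n!$ cancels the $1/n!$ from the exponential series, and summing over $n$ amounts to dropping the constraint $\sum m_i = n$ and just summing over all finite tuples.

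Then I would collect the coefficient of $Z^m$: this is precisely the sum over all tuples $(m_0, \ldots, m_d)$ with $m_0 + q m_1 + \cdots + q^d m_d = m$ of the term $\frac{Y^{m_0 + \cdots + m_d}}{m_0! \cdots m_d! \cdot p^{m_1 + 2 m_2 + \cdots + d m_d}}$, which is exactly the claimed formula for $P_m(Y)$. One should be slightly careful about the indexing convention — allowing $d$ to vary, or fixing a large $d$ and permitting trailing zero entries $m_i = 0$, which does not change the sum since such entries contribute trivially to all factors; I would phrase the identity as a sum over all finitely-supported sequences $(m_0, m_1, \ldots)$ with $\sum_i q^i m_i = m$.

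This is essentially a bookkeeping argument, so I do not anticipate a genuine obstacle; the only point requiring a line of care is justifying the rearrangement of the (formal) power series — interchanging the sum over $n$ with the multinomial expansion and regrouping by the exponent of $Z$. Since everything takes place in the ring $L\dcroc{Z}$ of formal power series and, for each fixed $m$, only finitely many tuples $(m_0, \ldots, m_d)$ satisfy $\sum q^i m_i = m$, the coefficient of $Z^m$ is a finite sum and the manipulation is legitimate. I would state this convergence-of-formal-series remark briefly and then present the computation.
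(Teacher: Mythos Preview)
Your proposal is correct and essentially matches the paper's argument; the only cosmetic difference is that the paper factors $\exp(Y \log_{\LT}(Z)) = \prod_{k \geq 0} \exp(Y Z^{q^k}/p^k)$ and expands each factor as $\sum_{m_k \geq 0} (Y Z^{q^k}/p^k)^{m_k}/m_k!$, whereas you expand the outer exponential series first and then apply the multinomial theorem to $(\sum_k Z^{q^k}/p^k)^n$. The two manipulations are equivalent and produce the same coefficient of $Z^m$.
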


\begin{proof}
Since $\log_{\LT}(Z) = \sum_{k \geq 0} Z^{q^k}/p^k$ and $\exp$ is the usual exponential, 
\[ \sum_{m=0}^{+\infty} P_m(Y) Z^m = \exp(Y \cdot \log_{\LT}(Z)) = \prod_{k \geq 0} \exp( Y \cdot Z^{q^k}/p^k) = \prod_{k \geq 0} \sum_{j \geq 0} ( Y \cdot Z^{q^k}/p^k)^j / j! \]
The coefficient of $Z^m$ is the sum of $Y^{m_0+\cdots+m_d} / m_0! \cdots m_d! \cdot p^{1 \cdot m_1 + 2 \cdot m_2 + \cdots + d \cdot m_d}$ over all $d \geq 0$ and $(m_0,\cdots,m_d) \in \ZZ_{\geq 0}^{d+1}$ such that $m_0+qm_1+\cdots+q^d m_d=m$. 
\end{proof}

For example, if $i \leq q-1$, then 
\begin{align*}
P_i(Y) & =Y^i/i! \\
P_{q+i}(Y) &= \frac{Y^{q+i}}{(q+i)!} + \frac{Y^{i+1}}{p \cdot i!} \\ 
P_{2q+i}(Y) &= \frac{Y^{2q+i}}{(2q+i)!} + \frac{Y^{q+i+1}}{p \cdot (q+i)!} + \frac{Y^{i+2}}{2p^2 \cdot i!}.
\end{align*}

Because $L = \QQ_{p^2}$, it follows from Lemma 3.4.b of \cite{ST} that 
\[\vp(\Omega) = \frac{1}{p-1} - \frac{1}{e(q-1)} = \frac{p}{q-1}.\]

\begin{lemm}
\label{rema15}
If $i \leq q-1$ and $i=(ab)_p$ in base $p$, then $\vp(P_i(\Omega)) = \frac{a+bp}{q-1} = w(i)$.
\end{lemm}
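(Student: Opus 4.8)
The plan is to reduce everything to the explicit shape of $P_i$ for small~$i$. By Proposition~\ref{prop11}, a nonzero term of $P_m(Y)$ corresponds to a solution of $m_0+qm_1+\cdots+q^dm_d=m$ in nonnegative integers; when $m=i\leq q-1$ the only possibility is $m_0=i$ and $m_1=\cdots=m_d=0$, so that $P_i(Y)=Y^i/i!$ exactly, as already recorded in the examples following Proposition~\ref{prop11}. Since this is a single monomial there is no cancellation, and hence $\vp(P_i(\Omega))=i\cdot\vp(\Omega)-\vp(i!)$. After this observation the lemma is a short computation.

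For the first term I would substitute $\vp(\Omega)=p/(q-1)$ and $i=ap+b$ with $0\leq a,b\leq p-1$, giving $i\cdot\vp(\Omega)=(ap^2+bp)/(q-1)$. For the second term I would invoke Legendre's formula $\vp(i!)=(i-s_p(i))/(p-1)$, where $s_p(i)=a+b$ is the sum of the base-$p$ digits of~$i$; since $i-s_p(i)=ap+b-a-b=a(p-1)$, this yields $\vp(i!)=a$. Subtracting and using $q=p^2$,
\[
\vp(P_i(\Omega))=\frac{ap^2+bp}{q-1}-a=\frac{ap^2+bp-a(q-1)}{q-1}=\frac{a+bp}{q-1}.
\]
It then remains only to recognise this as $w(i)$: writing $i=(ab)_p$ means $k_0=b$ and $k_1=a$ in the notation defining~$w$, so $w(i)=\frac{p}{q-1}(b+p^{-1}a)=(bp+a)/(q-1)$, which matches.

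There is no genuine obstacle here; the only two points requiring a moment's care are the verification that $i\leq q-1$ really does force the single-term form of $P_i$ — so that no contribution from the higher terms $Z^{q^k}/p^k$ of $\log_{\LT}(Z)$ enters — and the correct digit bookkeeping in Legendre's formula. I expect this lemma to serve afterwards as the base case (the two-digit case $h\leq 1$) for an induction on the number of base-$p$ digits of~$k$ in the proof of Theorem~A.
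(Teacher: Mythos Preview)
Your proof is correct and follows essentially the same approach as the paper: both use Proposition~\ref{prop11} to reduce to $P_i(\Omega)=\Omega^i/i!$, then compute $i\cdot\vp(\Omega)-\vp(i!)$ via Legendre's formula. The only cosmetic difference is that the paper writes $\vp(\Omega)$ as $\frac{1}{p-1}-\frac{1}{q-1}$ rather than the equivalent $\frac{p}{q-1}$, leading to the intermediate expression $\frac{s_p(i)}{p-1}-\frac{i}{q-1}$ before reaching $\frac{a+bp}{q-1}$.
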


\begin{proof}
If $i \leq q-1$, then $P_i(\Omega) = \Omega^i / i!$ by Proposition \ref{prop11}, so that
\[ \vp(P_i(\Omega)) = i \cdot \left( \frac{1}{p-1}-\frac{1}{q-1} \right)-\frac{i-s_p(i)}{p-1} = \frac{s_p(i)}{p-1}-\frac{i}{q-1} = \frac{a+bp}{q-1}. \qedhere \] 
\end{proof}

\section{The map $w$}

Recall that $w : \ZZ_{\geq 0} \to \QQ$ is the map defined by  \[ w(k) =  \frac{p}{q-1} \cdot (k_0+p^{-1} k_1 + \cdots + p^{-h} \cdot k_h) \text{ if $k=(k_h \cdots k_0)_p$ in base $p$.} \]

\begin{prop}
\label{prop17}
The function $w: \ZZ_{\geq 0} \to \QQ_{\geq 0}$ has the following properties:
\begin{enumerate}
\item $w(k) < 1+1/(q-1)$;
\item $w(k) \geq 1$ if and only if $k \equiv -1 \bmod{q}$, and then $w(k)>1$ unless $k=q-1$;
\item if $\ell>k$, then $w(\ell) - w(k) \in \ZZ$ if and only if $k=qj$ and $\ell=qj+(q-1)$;
\item $w(pk) = 1/p \cdot w(k)$;
\item $w(p^n k+i) = w(p^n k) +w(i)$ if $0 \leq i \leq p^n-1$;
\item For all $a, b \geq 0$ we have $w(a+b) \leq w(a) + w(b)$.
\end{enumerate}
\end{prop}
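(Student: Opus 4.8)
The plan is to establish the six properties in the order (4), (5), (1), (2), then a brief injectivity remark, then (3), and finally (6). Properties (4) and (5) are immediate from the base-$p$ expansion: multiplying $k$ by $p$ shifts every digit up by one position and inserts a $0$ in position $0$, so $w(pk)=\tfrac1p w(k)$ is read straight off the definition; similarly, if $0\le i\le p^n-1$ then the digits of $p^nk+i$ in positions $<n$ are those of $i$ and those in positions $\ge n$ are those of $k$ shifted up by $n$, which gives $w(p^nk+i)=w(p^nk)+w(i)$. Iterating (4) also gives $w(q^jm)=w(m)/q^j$, which will be convenient later.

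For (1) I bound $\sum_{j\ge 0}p^{-j}k_j\le (p-1)\sum_{j\ge 0}p^{-j}=p$, with strict inequality for any finite $k$, whence $w(k)<\tfrac{p}{q-1}\cdot p=\tfrac{q}{q-1}=1+\tfrac1{q-1}$. For (2) I separate off the two lowest digits: $k_0+p^{-1}k_1\le p-p^{-1}=(q-1)/p$, with equality exactly when $k_0=k_1=p-1$, i.e. $k\equiv -1\bmod q$, while the tail $\sum_{j\ge 2}p^{-j}k_j$ lies in $[0,p^{-1})$. Hence if $k\nequiv -1\bmod q$ then $k_0+p^{-1}k_1\le p-2p^{-1}$ and the whole sum is $<(q-1)/p$, so $w(k)<1$; and if $k\equiv -1\bmod q$ the sum is $\ge (q-1)/p$ with equality iff all higher digits vanish, i.e. iff $k=q-1$. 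The same splitting yields the identity $w\bigl(qm+(q-1)\bigr)=1+w(m)/q=1+w(qm)$, which is the crucial input for (3).

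Next, $w$ is injective: padding the base-$p$ expansions of $k$ and $\ell$ to a common length $h+1$, the quantity $\tfrac{q-1}{p}p^h w(k)=\sum_{j=0}^h p^{h-j}k_j$ is the integer whose base-$p$ digits are the reversal of those of $k$, and digit-reversal is a bijection on strings of fixed length. For (3), property (1) shows that if $w(\ell)-w(k)\in\ZZ$ then $w(\ell)-w(k)\in\{-1,0,1\}$, and the value $0$ is ruled out by injectivity since $\ell\ne k$. If $w(\ell)-w(k)=1$ then $w(\ell)\ge 1$, so by (2) $\ell=qm+(q-1)$ for some $m\ge 0$, and the identity above gives $w(k)=w(m)/q=w(qm)$, hence $k=qm$ by injectivity; thus $k=qj$ and $\ell=qj+(q-1)$ with $j=m$. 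The case $w(\ell)-w(k)=-1$ would symmetrically force $k=qm+(q-1)$ and $\ell=qm$, so $\ell<k$, contradicting $\ell>k$; and conversely, for $k=qj$ and $\ell=qj+(q-1)$ the identity gives $w(\ell)-w(k)=1\in\ZZ$, completing (3).

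Finally, for (6) I use the closed form $w(k)=\tfrac{p}{q-1}k-\sum_{j\ge 1}p^{1-j}\lfloor k/p^j\rfloor$, which follows from $k_j=\lfloor k/p^j\rfloor-p\lfloor k/p^{j+1}\rfloor$ by a telescoping rearrangement of $\sum_{j\ge 0}p^{-j}k_j$. Subadditivity is then immediate: the first term is additive in $k$, while $\lfloor (a+b)/p^j\rfloor\ge\lfloor a/p^j\rfloor+\lfloor b/p^j\rfloor$ and the coefficients $p^{1-j}$ (for $j\ge 1$) are positive, so $w(a+b)\le w(a)+w(b)$. The only genuinely delicate step is (3): one must be sure that the bound from (1) really confines the integer difference to $\{-1,0,1\}$, and that the structural identity $w(qm+(q-1))=1+w(m)/q$ combined with injectivity leaves no further possibilities. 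Everything else is a direct manipulation of base-$p$ digits, and I anticipate no difficulties there.
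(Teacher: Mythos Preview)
Your proof is correct and covers all six items. For (1), (2), (4) and (5) you do essentially what the paper does: bound the digit sum by a geometric series and read off the shift/split behaviour of the base-$p$ expansion. The two places where you genuinely diverge are (3) and (6). For injectivity the paper compares $p$-adic valuations of $\sum k_j p^{-j}$ and $\sum \ell_j p^{-j}$ and peels off digits by descending induction, whereas your digit-reversal observation (that $\tfrac{q-1}{p}p^h w(k)$ is the integer with the base-$p$ digits of $k$ reversed) gives injectivity in one stroke; you also explicitly dispose of the case $w(\ell)-w(k)=-1$, which the paper leaves implicit. For (6) the paper tracks the base-$p$ carries $r_i$ in the addition $a+b$ and shows the defect $w(a)+w(b)-w(a+b)$ equals $(p^2-1)\sum_{i\ge 1} r_i/p^i\ge 0$; your route via the closed form $w(k)=\tfrac{p}{q-1}k-\sum_{j\ge 1}p^{1-j}\lfloor k/p^j\rfloor$ and the inequality $\lfloor (a+b)/p^j\rfloor\ge \lfloor a/p^j\rfloor+\lfloor b/p^j\rfloor$ is a clean alternative that avoids carry bookkeeping, at the cost of one telescoping verification. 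Either approach yields the same quantitative defect, since the carry $r_j$ is exactly $\lfloor (a+b)/p^j\rfloor-\lfloor a/p^j\rfloor-\lfloor b/p^j\rfloor$.
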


\begin{proof}
Item (1) results from the fact that\[ w(k) = (k_0+p^{-1} k_1 + \cdots + p^{-h} \cdot k_h) \cdot \frac{p}{q-1} < \frac{p^2}{q-1} = 1+\frac{1}{q-1}.  \] 
If $k_0 \leq p-2$, or if $k_0 = p-1$ and $k_1 \leq p-2$, then $w(k) \leq (p^{h+1}-1-p^{h-1})/p^{h-1}(q-1) < 1$, so if $w(k) \geq 1$, then $k_0 = p-1$ and $k_1 = p-1$, and  $k \equiv -1 \bmod{q}$. Conversely, if $k \equiv -1 \bmod{q}$, then $k_0 = p-1$ and $k_1 = p-1$, and $w(k) \geq 1$. Finally, if we have equality, then $k_i=0$ for all $i \geq 2$. This proves (2).

Write $k=(k_h \cdots k_0)_p$ and $\ell = (\ell_i \cdots \ell_0)_p$. Since $w(k) < 1+1/(q-1)$, if $w(\ell) - w(k) \in \ZZ_{\geq 0}$, then $w(\ell)=w(k)$ or $w(\ell)=w(k)+1$. If $w(\ell)=w(k)$, then $k_0+p^{-1} k_1 + \cdots + p^{-h} \cdot k_h = \ell_0 + p^{-1} \ell_1 + \cdots + p^{-i} \cdot \ell_i$. By comparing $p$-adic valuations, we get $h=i$, and then $k_h \equiv \ell_i \bmod{p}$ so that $k_h = \ell_i$. By descending induction, $k_j = \ell_j$ for all $j$, and $k=\ell$. If $w(\ell)=w(k)+1$, then $w(\ell) \geq 1$, and hence $\ell = (\ell_i \cdots \ell_2 (p-1)_1(p-1)_0)_p$ by item (2). We then have $w((\ell_i \cdots \ell_2 0_10_0)_p) = w(k)$ and hence $k=(\ell_i \cdots \ell_2 0_10_0)_p$. This implies (3).

Items (4) and (5) are straightforward. For item (6), let $\{a_i\}$, $\{b_i\}$ and $\{c_i\}$ be the digits of $a$, $b$ and $c$ in base $p$. Let $r_0 = 0$ and let $r_i \in \{0,1\}$ be the $i$th carry when adding $a$ and $b$, so that $c_i = a_i + b_i +r_i - p r_{i+1}$. The result follows from the following computation.
\[ \sum_{i \geq 0} \frac{c_i}{p^i} = \sum_{i \geq 0} \frac{a_i+b_i}{p^i} + \frac{r_i}{p^i} - \frac{p r_{i+1}}{p^i} = \sum_{i \geq 0} \frac{a_i+b_i}{p^i} -  (p^2 - 1) \sum_{i \geq 1} \frac{r_i}{p^i} \leq \sum_{i \geq 0} \frac{a_i+b_i}{p^i}. \qedhere \]
\end{proof}

\section{Congruences for the $P_k(\Omega)$}

From now on, we write $u_k$ for $P_k(\Omega)$ to lighten the notation. Recall that $q=p^2$.
The power series $G(Z)$ is a map between $\LT$ and $\Gm$, so that $G([p]_{\LT} (Z)) = [p]_{\Gm}(G(Z))$. 

\begin{prop}
\label{prop13}
We have $\sum_{m=1}^{+\infty} u_m Z^{qm} \equiv \sum_{k=1}^{+\infty} u_k^p Z^{kp} \bmod{p \cdot \MM_{\Cp}}$.
\end{prop}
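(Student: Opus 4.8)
The plan is to extract the stated congruence from the functional equation $G([p]_{\LT}(Z)) = [p]_{\Gm}(G(Z))$ by reducing everything modulo $p\cdot\MM_{\Cp}$ and identifying the surviving terms on each side. First I would recall that $G(Z) = \sum_{k\geq 1} u_k Z^k$, that $[p]_{\Gm}(W) = (1+W)^p - 1$, and that $[p]_{\LT}(Z) \equiv Z^q \bmod p\cdot o_{\Cp}\dcroc{Z}$: indeed $[p]_{\LT}(Z) = pZ + (\text{higher order})$ and the Lubin--Tate condition forces $[p]_{\LT}(Z) \equiv Z^q \bmod p$ (this is the defining congruence for the Lubin--Tate module with uniformizer $p$, here with residue field of size $q$). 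Since the coefficients $u_k$ lie in $o_{\Cp}$, substituting $Z^q$ for $[p]_{\LT}(Z)$ changes $G([p]_{\LT}(Z))$ only by an element of $p\cdot\MM_{\Cp}$, so the left-hand side becomes $\sum_{m\geq 1} u_m Z^{qm} \bmod p\cdot\MM_{\Cp}$.

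For the right-hand side, I would expand $[p]_{\Gm}(G(Z)) = (1+G(Z))^p - 1 = \sum_{j=1}^{p}\binom{p}{j} G(Z)^j$. All the binomial coefficients $\binom{p}{j}$ for $1\leq j\leq p-1$ are divisible by $p$, and since $G(Z)\in\MM_{\Cp}\dcroc{Z}$ each term $\binom{p}{j}G(Z)^j$ then lies in $p\cdot\MM_{\Cp}\dcroc{Z}$; hence modulo $p\cdot\MM_{\Cp}$ only the $j=p$ term survives, giving $[p]_{\Gm}(G(Z)) \equiv G(Z)^p \bmod p\cdot\MM_{\Cp}$. Then I would use the standard fact that raising a power series with $o_{\Cp}$-coefficients to the $p$-th power is, modulo $p$, the same as raising each coefficient to the $p$-th power and each monomial $Z^k$ to $Z^{kp}$; that is, $G(Z)^p = \bigl(\sum_k u_k Z^k\bigr)^p \equiv \sum_k u_k^p Z^{kp} \bmod p$. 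Combining the two sides yields the claimed congruence $\sum_{m\geq 1} u_m Z^{qm} \equiv \sum_{k\geq 1} u_k^p Z^{kp} \bmod p\cdot\MM_{\Cp}$.

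The only genuinely delicate point is bookkeeping about which ideal the error terms live in: one must be careful that "$\bmod\ p$" at the level of power-series coefficients in $o_{\Cp}$ translates correctly into "$\bmod\ p\cdot\MM_{\Cp}$" once we evaluate, and that the substitution $[p]_{\LT}(Z)\rightsquigarrow Z^q$ is legitimate term-by-term — this uses that $G$ has no constant term, so composing with something congruent to $Z^q$ modulo $p$ produces an error in $p\cdot\MM_{\Cp}$ rather than merely in $p\cdot o_{\Cp}$. The multinomial/Frobenius identity $(\sum u_k Z^k)^p \equiv \sum u_k^p Z^{kp}$ is routine (the cross terms carry a factor of $p$ from the multinomial coefficients, and again land in $p\cdot\MM_{\Cp}$ because $G$ has positive-order terms), so I do not expect any obstacle there; the main care is simply in tracking the maximal-ideal refinements throughout.
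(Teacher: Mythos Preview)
Your proof is correct and follows exactly the same route as the paper's: reduce the functional equation $G([p]_{\LT}(Z)) = [p]_{\Gm}(G(Z))$ using $[p]_{\LT}(Z)\equiv Z^q$ and $[p]_{\Gm}(Z)\equiv Z^p \bmod p$, and use $G(Z)\in\MM_{\Cp}\dcroc{Z}$ to push all error terms into $p\cdot\MM_{\Cp}$. One small correction to your bookkeeping paragraph: the reason the substitution $[p]_{\LT}(Z)\rightsquigarrow Z^q$ yields an error in $p\cdot\MM_{\Cp}$ (rather than only $p\cdot o_{\Cp}$) is that the coefficients $u_k$ themselves lie in $\MM_{\Cp}$---a fact you correctly invoked in your second paragraph---not merely that $G$ has no constant term.
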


\begin{proof}
We have $G(Z) \in \MM_{\Cp} \dcroc{Z}$ and $[p]_{\LT} (Z) \equiv Z^q \bmod{p}$ and $[p]_{\Gm}(Z) = Z^p \bmod{p}$. 

Since $G([p]_{\LT} (Z)) = [p]_{\Gm}(G(Z))$, we get $G(Z^q) \equiv G(Z)^p \bmod{p \cdot \MM_{\Cp}}$.
\end{proof}

\begin{coro}
\label{coro114}
If $k$ is not divisible by $p$, then $\vp(u_k) > 1/p$.
\end{coro}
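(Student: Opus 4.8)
The plan is to read off the corollary from a coefficient comparison in Proposition~\ref{prop13}. Both sides of the congruence
\[ \sum_{m \geq 1} u_m Z^{qm} \equiv \sum_{k \geq 1} u_k^p Z^{kp} \pmod{p \cdot \MM_{\Cp}} \]
are power series in $Z$ with coefficients in $o_{\Cp}$, so one may compare the coefficients of each power $Z^n$ separately, the congruences being taken modulo $p \cdot \MM_{\Cp}$, i.e.\ modulo elements of valuation strictly greater than $1$.

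First I would fix $k \geq 1$ with $p \nmid k$ and specialise to $n = pk$. On the right-hand side the coefficient of $Z^{pk}$ is $u_k^p$; note that $p \nmid k$ forces $k \geq 1$, so this term genuinely occurs in the sum. On the left-hand side the coefficient of $Z^{pk}$ is $u_{pk/q}$ if $q = p^2$ divides $pk$, and is $0$ otherwise; since $p \nmid k$ we have $p^2 \nmid pk$, so this coefficient is $0$. Hence $u_k^p \in p \cdot \MM_{\Cp}$, so that $p \cdot \vp(u_k) = \vp(u_k^p) > 1$, and therefore $\vp(u_k) > 1/p$, as claimed.

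There is no real obstacle here; the only points requiring a little care are the bookkeeping of which exponent of $Z$ selects a term from which side of the congruence, and the observation that $p \cdot \MM_{\Cp}$ consists exactly of the elements of valuation $> 1$, which is what produces the strict inequality rather than merely $\vp(u_k) \geq 1/p$.
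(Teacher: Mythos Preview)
Your argument is correct and is exactly the intended one: the paper leaves this corollary without an explicit proof, but comparing the coefficient of $Z^{pk}$ in Proposition~\ref{prop13} when $p \nmid k$ is precisely the step being invoked. The bookkeeping and the interpretation of the congruence modulo $p\cdot\MM_{\Cp}$ as a strict valuation inequality are both handled correctly.
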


\begin{coro}
\label{ppm}
We have $u_{pm}^p \equiv u_m \bmod{p \cdot \MM_{\Cp}}$.
\end{coro}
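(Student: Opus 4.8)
The plan is simply to extract the coefficient of $Z^{qm}$ from both sides of the congruence established in Proposition \ref{prop13}. That statement is a congruence between two elements of $\MM_{\Cp}\dcroc{Z}$ modulo $p \cdot \MM_{\Cp}$, i.e. a term-by-term congruence of coefficients; so it is legitimate to compare the coefficients of any fixed power of $Z$, and the relation claimed in Corollary \ref{ppm} should drop out of a single such comparison.

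Concretely, I would argue as follows. On the left-hand side $\sum_{m' \geq 1} u_{m'} Z^{qm'}$, the monomial $Z^{qm}$ appears only for $m' = m$, so its coefficient is $u_m$. On the right-hand side $\sum_{k \geq 1} u_k^p Z^{kp}$, the monomial $Z^{qm}$ appears exactly when $kp = qm$; since $q = p^2$ this forces $k = pm$, and the coefficient of $Z^{qm}$ is therefore $u_{pm}^p$. Equating the two coefficients gives $u_m \equiv u_{pm}^p \bmod{p \cdot \MM_{\Cp}}$, which is the assertion.

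I do not expect any real obstacle here: the only thing to be careful about is the role of the identity $q = p^2$, which guarantees that every exponent $qm$ occurring on the left is a multiple of $p$, hence is genuinely hit by a term $u_{pm}^p Z^{(pm)p}$ of the right-hand series rather than producing the trivial relation $u_m \equiv 0$. (This is exactly the same bookkeeping that, applied to exponents not divisible by $q$, yields Corollary \ref{coro114}.)
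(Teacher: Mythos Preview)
Your proposal is correct and matches the paper's own proof exactly: the paper simply says ``Take $k=pm$ in Proposition \ref{prop13},'' which is precisely the comparison of the coefficient of $Z^{qm}=Z^{kp}$ that you carry out. Your extra remark about the role of $q=p^2$ is accurate and helpful, but not needed for the argument itself.
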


\begin{proof}
Take $k=pm$ in Proposition \ref{prop13}.
\end{proof}

\begin{coro} 
\label{coro14} 
Take $m\geq 0$.
\begin{enumerate}  
\item Suppose that $\vp(u_{m}) \leq 1$. Then $\vp(u_{pm})=1/p \cdot \vp(u_m)$.
\item Suppose that $\vp(u_{m}) > 1$. Then $\vp(u_{pm}) > 1/p$.
\end{enumerate}
\end{coro}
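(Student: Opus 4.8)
The plan is to extract both statements from Corollary \ref{ppm}, which gives $u_{pm}^p \equiv u_m \bmod p\cdot\MM_{\Cp}$. Rewriting this congruence, we have $u_m - u_{pm}^p = p\cdot x$ for some $x \in \MM_{\Cp}$, so $\vp(u_m - u_{pm}^p) > 1$. Everything follows by playing $\vp(u_{pm}^p) = p\cdot\vp(u_{pm})$ against this.

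For item (1), assume $\vp(u_m) \leq 1$. Then $\vp(u_m) < 1 + \vp(x) = \vp(p x)$ is impossible only if... more carefully: since $\vp(u_m) \le 1 < \vp(px)$, the ultrametric inequality forces $\vp(u_{pm}^p) = \vp(u_m - px) = \min(\vp(u_m), \vp(px)) = \vp(u_m)$, because the two terms have different valuations (here one uses $\vp(u_m)\le 1$ strictly below $\vp(px)$; the edge case $\vp(u_m)=1$ still gives $\vp(px)>1=\vp(u_m)$). Hence $p\cdot\vp(u_{pm}) = \vp(u_m)$, i.e. $\vp(u_{pm}) = \tfrac1p\vp(u_m)$, as claimed.

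For item (2), assume $\vp(u_m) > 1$. Then both $u_m$ and $px$ have valuation $>1$, so $\vp(u_{pm}^p) = \vp(u_m - px) > 1$, whence $p\cdot\vp(u_{pm}) > 1$ and $\vp(u_{pm}) > 1/p$.

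There is no real obstacle here: the only point requiring a moment's care is the borderline case $\vp(u_m) = 1$ in item (1), where one must check that $\vp(u_m)$ and $\vp(px)$ are still distinct so that the ultrametric equality (rather than merely an inequality) applies — and indeed $\vp(px) = 1 + \vp(x) > 1 = \vp(u_m)$ since $x \in \MM_{\Cp}$ forces $\vp(x) > 0$. One should also note in passing that $u_{pm}\in\MM_{\Cp}$ so $\vp(u_{pm})>0$, which is already implicit in $G(Z)\in\MM_{\Cp}\dcroc{Z}$.
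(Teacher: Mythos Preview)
Your proposal is correct and is precisely the argument the paper has in mind: the paper's proof consists of the single sentence ``Both cases follow easily from Corollary \ref{ppm}'', and what you have written is the routine unpacking of that sentence via the ultrametric inequality applied to $u_m - u_{pm}^p \in p\cdot\MM_{\Cp}$. There is nothing to add.
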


\begin{proof} 
Both cases follow easily from Corollary \ref{ppm}.
\end{proof}

We now compare $[p]_{\LT}(Z)$ and $Z^q+pZ$ (compare with (iv) of \S 2.2 of \cite{Haz}).

\begin{lemm}
\label{prop118}
We have $[p]_{\LT}(Z) = Z^q + p Z + p^2 \cdot s(Z)$ for some $s(Z) \in Z^2 \cdot \Zp\dcroc{Z}$.
\end{lemm}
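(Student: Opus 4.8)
The plan is to leverage the two functional equations governing $\log_{\LT}$. On the one hand, $\log_{\LT}([p]_{\LT}(Z)) = p \cdot \log_{\LT}(Z)$; on the other hand, the shape $\log_{\LT}(W) = \sum_{k \geq 0} W^{q^k}/p^k$ gives the self-similarity $\log_{\LT}(W) = W + \tfrac{1}{p} \log_{\LT}(W^q)$. Write $F(Z) = [p]_{\LT}(Z)$, which by Lubin--Tate theory (or by the functional equation lemma of \S 2.2 of \cite{Haz}, since $\log_{\LT}$ has coefficients in $\QQ_p$) lies in $Z \cdot \Zp\dcroc{Z}$ and satisfies $F(Z) \equiv Z^q \bmod p$. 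Applying the self-similarity to both sides of $\log_{\LT}(F) = p\log_{\LT}(Z) = pZ + \log_{\LT}(Z^q) = pZ + Z^q + \tfrac{1}{p}\log_{\LT}(Z^{q^2})$, the term $\log_{\LT}(F) = F + \tfrac{1}{p}\log_{\LT}(F^q)$ cancels and we obtain the exact identity
\[ F(Z) = Z^q + pZ + \frac{1}{p}\bigl( \log_{\LT}(Z^{q^2}) - \log_{\LT}(F(Z)^q) \bigr). \]
So everything reduces to showing $\log_{\LT}(Z^{q^2}) - \log_{\LT}(F^q) \in p^3 \cdot \Zp\dcroc{Z}$.

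Expanding, $\log_{\LT}(Z^{q^2}) - \log_{\LT}(F^q) = \sum_{k \geq 0} p^{-k}\bigl( Z^{q^{k+2}} - F^{q^{k+1}}\bigr)$, so it is enough to see that each summand lies in $p^3 \cdot \Zp\dcroc{Z}$. For the $k=0$ term, write $F = Z^q + pR$ with $R \in \Zp\dcroc{Z}$ and expand $F^q - Z^{q^2}$ binomially: each term is a multiple of $\binom{q}{j}p^j$ for some $1 \leq j \leq q$, and $\vp(\binom{p^2}{j}p^j) = j + 2 - \vp(j) \geq 3$ for $1 \leq j \leq q-1$ while $\vp(p^q) = p^2 \geq 3$, so $F^q \equiv Z^{q^2} \bmod p^3$. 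The key observation for the remaining terms is that if $U, V \in \Zp\dcroc{Z}$ with $U \equiv V \bmod p^m$ for some $m \geq 1$, then $U^q \equiv V^q \bmod p^{m+2}$ --- again by expanding $(V + p^m W)^q$ and using that $\vp(\binom{q}{j}) + mj \geq m+2$ for all $1 \leq j \leq q$. Iterating this starting from $F^q \equiv Z^{q^2} \bmod p^3$ gives $F^{q^{k+1}} \equiv Z^{q^{k+2}} \bmod p^{3+2k}$, so that $p^{-k}\bigl(Z^{q^{k+2}} - F^{q^{k+1}}\bigr) \in p^{3+k}\cdot\Zp\dcroc{Z} \subseteq p^3 \cdot \Zp\dcroc{Z}$ for every $k \geq 0$.

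It then follows that $\tfrac{1}{p}\bigl(\log_{\LT}(Z^{q^2}) - \log_{\LT}(F^q)\bigr) = p^2 s(Z)$ for some $s(Z) \in \Zp\dcroc{Z}$, and since $F(Z)$ has lowest-degree term $pZ$, the series $F(Z)^q$ has lowest-degree term $p^q Z^q$ while the other contributions to $\log_{\LT}(Z^{q^2}) - \log_{\LT}(F^q)$ have degree at least $q^2$; hence $s(Z) \in Z^q \cdot \Zp\dcroc{Z} \subseteq Z^2 \cdot \Zp\dcroc{Z}$, which is exactly the claim. The step I expect to be the main obstacle is the uniform-in-$k$ bookkeeping in the second paragraph: one must see that passing from $F^{q^k}$ to $F^{q^{k+1}}$ improves the congruence by a factor $p^2$, whereas the division by $p^k$ coming from $\log_{\LT}$ costs only $p^k$, so the net gain is $p^k$ and the $p^3$-integrality survives for all $k$; this hinges on the exact valuations $\vp(\binom{p^2}{j}) = 2 - \vp(j)$. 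Alternatively one may simply invoke part (iv) of the functional equation lemma in \S 2.2 of \cite{Haz}.
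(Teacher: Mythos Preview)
Your argument is correct, but it follows a different path from the paper's proof. The paper writes $[p]_{\LT}(Z) = Z^q + pZ + pr(Z)$ with $r(Z)\in Z^2\Zp\dcroc{Z}$, then Taylor-expands $\log_{\LT}$ around $Z^q$ and uses the facts that $\log'_{\LT}(Z)\in 1+pZ\Zp\dcroc{Z}$, that $\log^{(i)}_{\LT}(Z)\in p\Zp\dcroc{Z}$ for $i\geq 2$, and that $p^{i+1}/i!\in p^2\Zp$ for $i\geq 2$, to conclude $pr(Z)\equiv 0\bmod p^2$. Your route instead exploits the self-similarity $\log_{\LT}(W)=W+\tfrac{1}{p}\log_{\LT}(W^q)$ to obtain the \emph{exact} identity $F(Z)=Z^q+pZ+\tfrac{1}{p}\bigl(\log_{\LT}(Z^{q^2})-\log_{\LT}(F^q)\bigr)$, and then estimates each summand $p^{-k}(Z^{q^{k+2}}-F^{q^{k+1}})$ directly via the binomial bound $\vp\!\binom{p^2}{j}=2-\vp(j)$. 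The paper's proof is shorter and avoids the inductive bookkeeping in $k$; on the other hand, your explicit formula immediately shows that $F(Z)-Z^q-pZ$ has no terms of degree $<q$, so in fact you obtain the stronger conclusion $s(Z)\in Z^q\cdot\Zp\dcroc{Z}$, not just $s(Z)\in Z^2\cdot\Zp\dcroc{Z}$.
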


\begin{proof}
There exists $r(Z) \in Z^2 \cdot \Zp\dcroc{Z}$ such that $[p]_{\LT}(Z) = Z^q + pZ + p r(Z)$. By the properties of $\log_{\LT}$, we have $\log_{\LT}([p]_{\LT}(Z)) = p \log_{\LT}(Z)$. Expanding around $Z^q$, we get
\[ \log_{\LT}( Z^q + pZ + p r(Z) ) = \log_{\LT}(Z^q) + (pZ + p r(Z))  \log'_{\LT}(Z^q) + \sum_{i \geq 2} \frac{(pZ + p r(Z))^i}{i!}  \log^{(i)}_{\LT}(Z^q) \]
Our choice of $\log_{\LT}$ is such that $\log_{\LT}(Z^q) = p \log_{\LT}(Z)-pZ$ and $\log'_{\LT}(Z) \in 1 + pZ \cdot \Zp\dcroc{Z}$ and $\log_{\LT}^{(i)}(Z) \in p \Zp\dcroc{Z}$ for all $i \geq 2$. Note also that $p^{i+1}/i! \in p^2 \Zp$ for all $i \geq 2$. 

The above equation now implies that $pr(Z) \equiv 0 \bmod{p^2}$ so that $r(Z) = ps(Z)$.
\end{proof}

\begin{coro}
\label{coro119} 
The coefficient of $Z^{qn}$ in $G([p]_{LT}(Z))$ is congruent to $u_n  \bmod{p^2}$.
\end{coro}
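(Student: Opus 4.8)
The statement claims that the coefficient of $Z^{qn}$ in $G([p]_{\LT}(Z))$ equals $u_n \bmod p^2$. The plan is to exploit the decomposition $[p]_{\LT}(Z) = Z^q + pZ + p^2 s(Z)$ from Lemma \ref{prop118}. First I would write $G([p]_{\LT}(Z)) = G(Z^q + pZ + p^2 s(Z))$ and Taylor-expand $G$ around $Z^q$:
\[
G(Z^q + pZ + p^2 s(Z)) = G(Z^q) + (pZ + p^2 s(Z)) G'(Z^q) + \sum_{i \geq 2} \frac{(pZ + p^2 s(Z))^i}{i!} G^{(i)}(Z^q).
\]
The point of introducing $G$ here rather than $\log_{\LT}$ is that $G(Z) = \sum_{k\geq 1} u_k Z^k$ has coefficients $u_k \in \MM_{\Cp}$, so $G^{(i)}(Z^q) \in \MM_{\Cp}\dcroc{Z}$ as well, and the expansion converges $Z$-adically.

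Next I would control each term modulo $p^2 \MM_{\Cp}$. The leading term $G(Z^q) = \sum_{k\geq 1} u_k Z^{qk}$ contributes exactly $u_n$ to the coefficient of $Z^{qn}$, and this is the term we want. The term $(pZ + p^2 s(Z)) G'(Z^q)$ is divisible by $p$; moreover $pZ G'(Z^q) = p \sum_{k\geq 1} k u_k Z^{qk - q + 1}$, whose exponents $qk - q + 1$ are all $\equiv 1 \bmod q$, hence never equal to $qn$ — so this piece contributes nothing to the $Z^{qn}$-coefficient. The piece $p^2 s(Z) G'(Z^q)$ is already in $p^2\Zp\dcroc{Z}\cdot\MM_{\Cp}\dcroc{Z} \subseteq p^2\MM_{\Cp}\dcroc{Z}$, so it vanishes mod $p^2\MM_{\Cp}$. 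For $i \geq 2$, the factor $p^i/i!$ has valuation $i - \vp(i!) \geq 2$ for $i=2,3$ (indeed $\vp(i!) = 0$ for $i=2,3$ and one checks $p^i/i! \in p^2\Zp$ for all $i\geq 2$, exactly as in the proof of Lemma \ref{prop118}), so every $i \geq 2$ term lies in $p^2\MM_{\Cp}\dcroc{Z}$ and is killed.

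Collecting these observations, the coefficient of $Z^{qn}$ in $G([p]_{\LT}(Z))$ is congruent mod $p^2\Cp^{\circ}$ — more precisely mod $p^2 \MM_{\Cp}$, which suffices for "mod $p^2$" since the difference lies in $p^2 o_{\Cp}$ — to the coefficient of $Z^{qn}$ in $G(Z^q)$, namely $u_n$. The only mild subtlety, and the step I would be most careful about, is the bookkeeping in the Taylor expansion: one must verify that the expansion is legitimate as an identity of formal power series over $o_{\Cp}$ (each coefficient of a fixed power of $Z$ is a finite sum, since $s(Z) \in Z^2\Zp\dcroc{Z}$ forces the $i$-th term to start in degree $\geq qn$ for $i$ large), and that no cross term of the form $p^2 s(Z)$ inside the $i\geq 2$ sum conspires to drop below valuation $2$ — but since $s$ has $\Zp$-coefficients this cannot happen. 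Everything else is a direct valuation estimate identical in spirit to Lemma \ref{prop118}.
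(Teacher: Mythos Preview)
Your approach is essentially the same as the paper's: both invoke Lemma \ref{prop118} to replace $[p]_{\LT}(Z)$ by $Z^q+pZ$ modulo $p^2$, expand $G$ to first order around $Z^q$, and then observe that $pZ\cdot G'(Z^q)=\sum_{m\geq 1} pm\,u_m\,Z^{q(m-1)+1}$ contributes only to exponents $\equiv 1\bmod q$ and hence never to $Z^{qn}$.

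There is, however, a genuine (though easily repaired) error in your handling of the $i\geq 2$ terms. You assert that $p^i/i!\in p^2\Zp$ for all $i\geq 2$ and that $\vp(i!)=0$ for $i=2,3$, appealing to Lemma \ref{prop118}. But that lemma proves $p^{\,i+1}/i!\in p^2\Zp$, the extra factor of $p$ coming from $\log_{\LT}^{(i)}(Z)\in p\Zp\dcroc{Z}$; there is no analogous extra $p$ available for $G^{(i)}$. Your claim fails for $p=2$: for instance $\val_2(2^2/2!)=1$, and more generally $\val_2(2^i/i!)=s_2(i)=1$ whenever $i$ is a power of $2$. So as written, the tail estimate does not kill the $i\geq 2$ terms when $p=2$.

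The fix is immediate and brings you back in line with the paper's (implicit) argument: since $G(Z)\in o_{\Cp}\dcroc{Z}$, the divided power $\tfrac{1}{i!}G^{(i)}(Z^q)$ already has coefficients $\binom{k}{i}u_k\in o_{\Cp}$, so the $i$-th Taylor term lies in $(pZ+p^2s(Z))^i\cdot o_{\Cp}\dcroc{Z}\subseteq p^i\,o_{\Cp}\dcroc{Z}\subseteq p^2\,o_{\Cp}\dcroc{Z}$ for every $i\geq 2$, with no need to estimate $p^i/i!$ at all. With this correction your proof is complete and coincides with the paper's.
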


\begin{proof}
Since $[p]_{\LT}(Z) \equiv  Z^q + p Z \bmod{p^2}$, Lemma \ref{prop118} tells us that
\begin{align*} 
G([p]_{\LT}(Z)) & \equiv G(Z^q) + p Z \cdot G'(Z^q) \bmod{p^2} \\
& \equiv \sum_{k \geq 1} u_k Z^{qk} + \sum_{m \geq 1} p m \cdot u_m Z^{q(m-1)+1} \bmod{p^2}.  
\end{align*}
Hence $pZ \cdot G'(Z^q)$ doesn't contribute to the coeffiicent of $Z^{qn}$ modulo $p^2$.
\end{proof}

\begin{prop}
\label{DivByu1} 
For all $k \geq 1$, we have  $k \cdot u_k  = u_1 \cdot \sum_{r=0}^{\lfloor \log_q(k) \rfloor} p^r u_{k-q^r}$.
\end{prop}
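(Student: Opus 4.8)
The plan is to obtain a functional equation for $G(Z)$ by differentiating its defining expression $1 + G(Z) = \exp(\Omega \cdot \log_{\LT}(Z))$, and then to read off the coefficient of $Z^k$. First I would record two facts coming from Proposition~\ref{prop11}: that $P_1(Y) = Y$, so $u_1 = \Omega$, and that $P_0(Y) = 1$, which makes it consistent to set $u_0 = 1$ (the only value of $k$ for which $u_0$ intervenes below is when $k$ is a power of $q$).

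Differentiating the identity gives $G'(Z) = \Omega \cdot \log'_{\LT}(Z) \cdot (1 + G(Z))$. Next I would feed in the explicit shape of the logarithm: since $\log_{\LT}(Z) = \sum_{m \geq 0} Z^{q^m}/p^m$ and $q = p^2$, we have $q^m/p^m = p^m$, hence $Z \cdot \log'_{\LT}(Z) = \sum_{m \geq 0} p^m Z^{q^m}$. Multiplying the differentiated identity by $Z$ therefore produces the clean relation
\[ Z \cdot G'(Z) = u_1 \cdot (1 + G(Z)) \cdot \sum_{m \geq 0} p^m Z^{q^m}. \]

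Finally I would compare the coefficients of $Z^k$ on the two sides. On the left this is $k \cdot u_k$. On the right, writing $1 + G(Z) = \sum_{j \geq 0} u_j Z^j$, the coefficient of $Z^k$ in the product is $\sum_m p^m u_{k - q^m}$, the sum running over those $m \geq 0$ with $q^m \leq k$, that is over $0 \leq m \leq \lfloor \log_q(k) \rfloor$; this is exactly the asserted identity. I do not expect a genuine obstacle here: the whole argument is a short manipulation of formal power series, and the only point needing a little care is keeping track of the index range and of the convention $u_0 = P_0(\Omega) = 1$. As a sanity check one can verify the formula directly for $k = 1$ and $k = q$ against Proposition~\ref{prop11} and Lemma~\ref{rema15}.
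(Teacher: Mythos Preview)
Your proof is correct and follows essentially the same approach as the paper: differentiate the defining relation $1+G(Z)=\exp(u_1\log_{\LT}(Z))$, use the explicit form of $\log'_{\LT}(Z)$ together with $q/p=p$, and compare coefficients. The only cosmetic difference is that you multiply through by $Z$ and read off the coefficient of $Z^k$, whereas the paper reads off the coefficient of $Z^{k-1}$ directly.
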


\begin{proof} 
We have $\sum_{k \geq 0} u_k Z^k = \exp(u_1 \cdot \log_{\LT}(Z))$. Applying $d/dZ$, we get 
\begin{align*} 
\sum_{k \geq 1} k u_k Z^{k-1} & = \exp(u_1 \cdot \log_{\LT}(Z)) \cdot u_1 \cdot \log_{\LT}'(Z) \\
&  = u_1 \cdot (\sum_{i \geq 0} u_i Z^i) \cdot (\sum_{r \geq 0} (q/p)^r Z^{q^r-1}).
\end{align*} 
The result follows from looking at the coefficient of $Z^{k-1}$ on both sides.
\end{proof}

\begin{coro}
\label{kuk} 
We have $u_1 \cdot u_{k-1} \equiv k u_k \bmod p$ for all $k \geq 1$.
\end{coro}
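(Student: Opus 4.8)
The plan is to obtain this congruence as an immediate reduction of Proposition \ref{DivByu1} modulo $p$. First I would record that $G(Z) = \sum_{m \geq 1} u_m Z^m$ lies in $\MM_{\Cp}\dcroc{Z}$, so $\vp(u_m) > 0$ for every $m \geq 1$, while $u_0 = P_0(\Omega) = 1$; in particular $\vp(u_m) \geq 0$ for all $m \geq 0$. Then I would rewrite the right-hand side of Proposition \ref{DivByu1} by splitting off the $r = 0$ term,
\[ k \cdot u_k = u_1 u_{k-1} + u_1 \cdot \sum_{r=1}^{\lfloor \log_q(k) \rfloor} p^r u_{k-q^r}, \]
and observe that every term in the remaining sum has valuation at least $r \geq 1$. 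Reducing modulo $p$ therefore kills this sum and leaves $k u_k \equiv u_1 u_{k-1} \bmod{p}$, which is exactly the assertion.

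There is essentially no obstacle here; the only points to keep track of are the range of the index $r$ — namely that $k - q^r \geq 0$ precisely when $r \leq \lfloor \log_q(k) \rfloor$, so that every $u_{k-q^r}$ occurring is a genuine coefficient of $G$, and that the $r = 0$ term is always present for $k \geq 1$ — and the small cases. When $k < q$ the sum has no terms with $r \geq 1$ at all, and one even gets the exact identity $k u_k = u_1 u_{k-1}$, consistent with $P_i(Y) = Y^i/i!$ for $i \leq q-1$ from Proposition \ref{prop11}; for $k = 1$ the congruence reads $u_1 = u_1 u_0$, which holds since $u_0 = 1$.
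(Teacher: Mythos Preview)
Your proposal is correct and is exactly the intended argument: the paper states this as an immediate corollary of Proposition~\ref{DivByu1}, and your reduction modulo $p$---killing the $r\geq 1$ terms because each $p^r u_{k-q^r}$ lies in $p\,o_{\Cp}$---is precisely that deduction spelled out.
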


\begin{prop}
\label{Modp^2} 
If $0 \leq i \leq p-1$ and $m \geq p$, then there exists $\zeta_{i,m} \in o_L$ such that
\[ u_{mp+i} \equiv \binom{mp+i}{i}^{-1} \cdot u_{mp} \cdot u_i + p \cdot \zeta_{i,m} \cdot u_{p(m-p) + i + 1}  \bmod{p^2}. \]
\end{prop}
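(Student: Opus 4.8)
The plan is to run the recursion of Proposition~\ref{DivByu1} downwards in the last base-$p$ digit $i$, truncated modulo $p^2$, and then to clean up the resulting error terms with Corollary~\ref{kuk}. First I dispose of the degenerate case $i=0$: there $\binom{mp}{0}=1$ and $u_0=1$, so the asserted congruence is simply $u_{mp}\equiv u_{mp}+p\zeta_{0,m}u_{p(m-p)+1}\bmod p^2$, which holds with $\zeta_{0,m}=0$. From now on I assume $1\leq i\leq p-1$, so that $mp+l$ is a unit of $\Zp$ for every $l$ with $1\leq l\leq i$; I also recall that $u_1=\Omega$ has $\vp(u_1)>0$ and that $u_i=\Omega^i/i!=u_1^{\,i}/i!$ for $i\leq q-1$ by Proposition~\ref{prop11}.

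The first step is a one-step recursion: for $1\leq j\leq p-1$ and $m\geq p$,
\[ u_{mp+j}\equiv \frac{u_1}{mp+j}\,u_{mp+j-1}+\frac{p(j+1)}{mp+j}\,u_{p(m-p)+j+1}\bmod p^2. \]
Indeed, Proposition~\ref{DivByu1} applied to $k=mp+j$ gives $(mp+j)u_{mp+j}=u_1\sum_{r\geq 0}p^r u_{mp+j-q^r}$; since $m\geq p$ forces $mp+j\geq q$, both the $r=0$ and $r=1$ terms occur, and every term with $r\geq 2$ has valuation at least $2+\vp(u_1)>2$, so $(mp+j)u_{mp+j}\equiv u_1 u_{mp+j-1}+p\,u_1 u_{p(m-p)+j}\bmod p^2$. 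Corollary~\ref{kuk} with $k=p(m-p)+j+1$ rewrites $p\,u_1 u_{p(m-p)+j}\equiv p\bigl(p(m-p)+j+1\bigr)u_{p(m-p)+j+1}\equiv p(j+1)u_{p(m-p)+j+1}\bmod p^2$ (the last step because $p(m-p)+j+1\equiv j+1\bmod p$), and dividing through by the unit $mp+j$ gives the claim.

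Next I iterate this, applying it for $j=i,i-1,\dots,1$ and multiplying the successive congruences together (legitimate modulo $p^2$, since each $u_1/(mp+j)$ lies in $o_{\Cp}$); as $u_{mp}$ is the endpoint this produces
\[ u_{mp+i}\equiv \frac{u_1^{\,i}}{(mp+1)(mp+2)\cdots(mp+i)}\,u_{mp}+\sum_{j=1}^{i}\frac{p(j+1)\,u_1^{\,i-j}}{(mp+j)(mp+j+1)\cdots(mp+i)}\,u_{p(m-p)+j+1}\bmod p^2. \]
Using $\binom{mp+i}{i}^{-1}=i!\,(mp)!/(mp+i)!$ together with $u_i=u_1^{\,i}/i!$ identifies the leading term with $\binom{mp+i}{i}^{-1}u_{mp}u_i$, as wanted. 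It then remains to collapse the error sum into a single multiple of $u_{p(m-p)+i+1}$.

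This last point is the main obstacle: the error terms have valuation only $>1$, not $\geq 2$, so one cannot simply discard them modulo $p^2$; instead, each summand carries exactly one factor of $p$ and the denominator $(mp+j)\cdots(mp+i)$ is a unit (here the hypothesis $i\leq p-1$ is essential), so it suffices to reduce the inner quantity $u_1^{\,i-j}u_{p(m-p)+j+1}$ modulo $p$. Iterating Corollary~\ref{kuk} gives $u_1^{\,r}u_N\equiv (N+1)(N+2)\cdots(N+r)\,u_{N+r}\bmod p$, hence
\[ u_1^{\,i-j}u_{p(m-p)+j+1}\equiv \Bigl(\prod_{t=j+2}^{i+1}(p(m-p)+t)\Bigr)\,u_{p(m-p)+i+1}\bmod p, \]
an integer multiple of $u_{p(m-p)+i+1}$. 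Feeding this back (legitimate modulo $p^2$, since the coefficient it multiplies has valuation exactly $1$) rewrites the error sum as $p\,\zeta_{i,m}\,u_{p(m-p)+i+1}$ with $\zeta_{i,m}=\sum_{j=1}^{i}\frac{(j+1)\prod_{t=j+2}^{i+1}(p(m-p)+t)}{(mp+j)(mp+j+1)\cdots(mp+i)}\in\Zp\subseteq o_L$, which is the proposition. All the valuation estimates used are uniform in $p$, so no case distinction on $p$ is needed.
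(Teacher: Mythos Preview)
Your proof is correct and follows essentially the same route as the paper's: both arguments use Proposition~\ref{DivByu1} to establish the one-step congruence $u_{mp+j}\equiv \frac{u_1}{mp+j}u_{mp+j-1}+\frac{p}{mp+j}u_1 u_{p(m-p)+j}\bmod p^2$ and then invoke Corollary~\ref{kuk} to convert error terms. The only difference is organisational: the paper runs a clean induction on $i$, defining $\zeta_{i,m}$ recursively by $\zeta_{i,m}=\frac{k-q+1}{k}(\zeta_{i-1,m}+1)$, whereas you unroll this recursion completely and write down a closed-form sum for $\zeta_{i,m}$, applying Corollary~\ref{kuk} a few extra times at the end to merge the error terms. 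One small imprecision: the coefficient $\frac{p(j+1)}{(mp+j)\cdots(mp+i)}$ has valuation $2$ (not exactly $1$) when $j=i=p-1$, but your argument only needs valuation $\geq 1$, so this does not affect correctness.
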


\begin{proof} 
We proceed by induction on $i$. When $i = 0$, we can even achieve equality by setting $\zeta_{0,m} := 0$, because $u_0 = 1$. Write $k := mp+i$ for brevity. For $i \geq 1$ we have
\[ u_k  \equiv  \frac{1}{k} u_1 \cdot u_{k-1}  +  \frac{p}{k} u_1 \cdot u_{k-q}  \bmod{p^2} \]
by Proposition \ref{DivByu1}, because here $k \in o_L^\times$. By the inductive hypothesis, we have
\[ u_{k-1} \equiv \binom{k-1}{i-1}^{-1} u_{mp} \cdot u_{i-1}  +  p \zeta_{i-1,m} \cdot u_{k-q}   \bmod{p^2}.\]
Note that since $i \leq p-1$, we have $u_i = u_1^i/i!$ by Proposition \ref{prop11}, so $u_1 u_{i-1} = \frac{u_1^i}{(i-1)!} = i u_i$. Substituting this information, we obtain
\begin{align*} 
u_k & \equiv \frac{u_1}{k}\cdot\left(\binom{k-1}{i-1}^{-1} u_{mp} \cdot u_{i-1} + p \zeta_{i-1,m}u_{k-q}\right) + \frac{p}{k} u_1 \cdot u_{k-q} \\
& \equiv \frac{i}{k}\binom{k-1}{i-1}^{-1}u_{mp}\cdot u_i+\frac{p}{k}(\zeta_{i-1,m}+1)u_1\cdot u_{k-q} \bmod{p^2}.
\end{align*}
On the other hand, by Corollary \ref{kuk}, we have
\[ p u_1 \cdot u_{k-q} \equiv p (k-q+1)u_{k-q+1}   \bmod{p^2}.\]
Hence we can rewrite the congruence as follows:
\[ u_k \equiv \binom{k}{i}^{-1} u_{mp}\cdot u_i+p\frac{k-q+1}{k}(\zeta_{i-1,m}+ 1) u_{k-q+1}   \bmod{p^2}.\]
Define $\zeta_{i,m} := \frac{k-q+1}{k}(\zeta_{i-1,m}+ 1)$ and observe that this lies in $o_L$ because $p \nmid k$. 
\end{proof}

We need to know what $\zeta_{p-1,m}$ is modulo $p$.

\begin{lemm}
\label{LastZeta} 
Take $1 \leq i \leq p-1$ and $m \geq 0$ and let $k = mp + i$. 

If $\zeta_{0,m} = 0$ and $\zeta_{i,m} = \frac{k-q+1}{k} (\zeta_{i-1,m} + 1)$ whenever $1 \leq i \leq p-1$, then $\zeta_{p-1,m} \equiv 0  \bmod{p}$.
\end{lemm}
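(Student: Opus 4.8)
The plan is to notice that the recursion only needs to be examined at the very last step $i=p-1$, where a numerical coincidence forces a factor of $p$ to appear in the numerator. Concretely, when $i=p-1$ we have $k=mp+(p-1)=(m+1)p-1$, so that
\[ k-q+1 = (m+1)p-1-p^2+1 = (m+1)p - p^2 = p\,(m+1-p). \]
Thus the factor $\tfrac{k-q+1}{k}$ appearing in the formula $\zeta_{p-1,m}=\tfrac{k-q+1}{k}(\zeta_{p-2,m}+1)$ is $p$ times $\tfrac{m+1-p}{k}$, and the conclusion $\zeta_{p-1,m}\equiv 0 \bmod p$ will follow as soon as we know that $\tfrac{m+1-p}{k}$ and $\zeta_{p-2,m}+1$ both lie in $o_L$.

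The steps I would carry out are therefore: first, record that $k=(m+1)p-1$ is prime to $p$, hence a unit in $\ZZ_p\subseteq o_L$, so $\tfrac{m+1-p}{k}\in o_L$ and $\tfrac{k-q+1}{k}\in p\cdot o_L$. Second, check by a trivial induction on $i$ that $\zeta_{i,m}\in o_L$ for all $0\le i\le p-1$: we have $\zeta_{0,m}=0\in o_L$, and for $1\le i\le p-1$ the integer $k=mp+i$ is not divisible by $p$, so $\tfrac{k-q+1}{k}\in o_L$ and hence $\zeta_{i,m}=\tfrac{k-q+1}{k}(\zeta_{i-1,m}+1)\in o_L$. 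In particular $\zeta_{p-2,m}\in o_L$ (this is vacuous when $p=2$, where $\zeta_{p-2,m}=\zeta_{0,m}=0$). Third, combine these: $\zeta_{p-1,m}=\tfrac{k-q+1}{k}\cdot(\zeta_{p-2,m}+1)$ is a product of an element of $p\cdot o_L$ with an element of $o_L$, hence lies in $p\cdot o_L$, which is exactly $\zeta_{p-1,m}\equiv 0\bmod p$.

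There is essentially no serious obstacle here; the only point requiring (minor) care is the bookkeeping in the second step, namely verifying that all the intermediate quantities $\zeta_{i,m}$ genuinely lie in $o_L$ and not merely in $L$, so that the divisibility by $p$ at the final step is an honest statement about $p\cdot o_L$. This integrality is automatic precisely because the range $1\le i\le p-1$ keeps $k=mp+i$ coprime to $p$ at every step, which is the same hypothesis already used to define $\zeta_{i,m}$ in Proposition \ref{Modp^2}. (One could alternatively reduce the recursion modulo $p$, substitute $\zeta_{i,m}\equiv (i+1)\eta_i$ to get $\eta_i\equiv\eta_{i-1}+1/i$ and hence $\zeta_{i,m}\equiv (i+1)\sum_{j=1}^i 1/j$, so that $\zeta_{p-1,m}\equiv p\sum_{j=1}^{p-1}1/j\equiv 0$; but the direct argument above is shorter and avoids dividing by $i+1$.)
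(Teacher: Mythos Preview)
Your proof is correct and follows essentially the same approach as the paper: both observe that at $i=p-1$ the factor $\tfrac{k-q+1}{k}$ lies in $p\cdot o_L$ (the paper phrases this as $\tfrac{k-q+1}{k}\equiv\tfrac{i+1}{i}\bmod p$ and then sets $i=p-1$), so $\zeta_{p-1,m}\equiv 0\bmod p$. Your version is slightly more careful in spelling out the integrality of the intermediate $\zeta_{i,m}$, which the paper leaves implicit.
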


\begin{proof} 
Note that modulo $p$, the recurrence relation satisfied by $\zeta_{i,m}$ is simply
\[ \zeta_{i,m} \equiv \frac{i+1}{i}(\zeta_{i-1,m} + 1)   \bmod{p}.\]
Now set $i = p-1$ to see that $\zeta_{p-1,m} \equiv 0  \bmod{p}$.
\end{proof}

\section{Proof of Theorem A}

We now use the functional equation of $G(Z)$ modulo $p^2$ in order to prove Theorem A.

\begin{defi} 
For each $n \geq 0$, let $C_n$  be the coefficient of $Z^{q n}$ in 
\[ (1 + G(Z))^p = \left(\sum_{k=0}^\infty u_k Z^k\right)^p. \]
\end{defi}

We develop some notation to compute $C_n$. 

\begin{defi} \hspace{1cm}
\begin{enumerate}
\item Let $|\mathbf{k}| := k_1 + \cdots + k_p$ for all $\mathbf{k} \in \NN^p$. 
\item For each $\mathbf{k} \in \NN^p$, define $u_{\mathbf{k}} := u_{k_1} \cdot u_{k_2} \cdot \cdots \cdot u_{k_p}$.
\item For each $n \geq 0$, let $X_n \subset \NN^p$ be a complete set of representatives for the orbits of the natural action of $S_p$ on $\{\mathbf{k} \in \NN^p : |\mathbf{k}| = n\}$. 
\end{enumerate}
\end{defi}

In this language, expanding $\left(\sum_{k=0}^\infty u_k Z^k\right)^p$ gives the following

\begin{lemm}
\label{CnFormula}  
We have $C_n = \sum_{\mathbf{k} \in X_{qn}} |S_p \cdot \mathbf{k}| \hspace{0.1cm} u_{\mathbf{k}}$.
\end{lemm}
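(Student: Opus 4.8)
The plan is to expand the $p$-th power as an iterated product and then organise the resulting sum according to the $S_p$-action. Writing $1 + G(Z) = \sum_{k \geq 0} u_k Z^k$ (recall that $u_0 = P_0(\Omega) = 1$), we have
\[ (1+G(Z))^p = \prod_{j=1}^p \Bigl( \sum_{k \geq 0} u_k Z^k \Bigr), \]
and multiplying this out, the coefficient of $Z^{N}$ in the product is the finite sum $\sum u_{k_1} u_{k_2} \cdots u_{k_p}$ taken over all $\mathbf{k} = (k_1, \dots, k_p) \in \NN^p$ with $k_1 + \cdots + k_p = N$. Taking $N = qn$, this says precisely that
\[ C_n = \sum_{\substack{\mathbf{k} \in \NN^p \\ |\mathbf{k}| = qn}} u_{\mathbf{k}}. \]

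Next I would observe that the symmetric group $S_p$ acts on the finite set $\{\mathbf{k} \in \NN^p : |\mathbf{k}| = qn\}$ by permuting coordinates, and that, since multiplication in $o_{\Cp}$ is commutative, the quantity $u_{\mathbf{k}} = u_{k_1} \cdots u_{k_p}$ depends only on the $S_p$-orbit of $\mathbf{k}$. Hence the sum above may be reorganised orbit by orbit: for each orbit $O$ and any choice of representative $\mathbf{k} \in O$ one has $\sum_{\mathbf{k}' \in O} u_{\mathbf{k}'} = |O| \cdot u_{\mathbf{k}}$. Summing over all orbits, and using that $X_{qn}$ is by definition a complete set of orbit representatives, yields
\[ C_n = \sum_{\mathbf{k} \in X_{qn}} |S_p \cdot \mathbf{k}| \, u_{\mathbf{k}}, \]
which is the asserted formula.

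There is no real obstacle here: this is simply the standard symmetrisation of a multinomial expansion, and the only points requiring (routine) care are verifying that $u_{\mathbf{k}}$ is genuinely constant on $S_p$-orbits and that the bookkeeping of the orbit sizes $|S_p \cdot \mathbf{k}|$ matches the multiplicities with which each monomial $u_{\mathbf{k}}$ appears in the raw expansion. All of this takes place at the level of formal power series, so no convergence issue arises, and each coefficient $C_n$ is a finite sum.
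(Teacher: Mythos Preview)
Your argument is correct and is exactly the routine expansion the paper has in mind; indeed the paper states the lemma without proof, simply noting that it follows from expanding $\left(\sum_{k \geq 0} u_k Z^k\right)^p$. Your write-up just makes explicit the grouping into $S_p$-orbits.
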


\begin{lemm}
\label{Orbit} 
We have $\vp(|S_p \cdot \mathbf{k}|) = 1$ whenever $k_i \neq k_j$ for some $i \neq j$.
\end{lemm}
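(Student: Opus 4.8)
The plan is to compute the orbit size directly via the orbit--stabilizer theorem and then read off its $p$-adic valuation. Since $S_p$ acts on the set $\{\mathbf{k}\in\NN^p : |\mathbf{k}|=n\}$ by permuting coordinates, the stabilizer of a given $\mathbf{k}$ consists exactly of those $\sigma\in S_p$ preserving the fibres of the map $i\mapsto k_i$; hence it is isomorphic to the product $\prod_v S_{m_v}$, where $v$ runs over the distinct values taken by the coordinates of $\mathbf{k}$ and $m_v = |\{\,i : k_i = v\,\}|$ is the corresponding multiplicity. Therefore
\[ |S_p\cdot\mathbf{k}| \;=\; \frac{|S_p|}{|\mathrm{Stab}_{S_p}(\mathbf{k})|} \;=\; \frac{p!}{\prod_v m_v!}, \]
a multinomial coefficient, since $\sum_v m_v = p$.

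Next I would observe that the hypothesis $k_i\neq k_j$ for some $i\neq j$ means there are at least two distinct values, so every multiplicity satisfies $1\le m_v\le p-1$. Consequently $p\nmid m_v!$ for each $v$, and thus $\vp\!\big(\prod_v m_v!\big)=0$, whereas $\vp(p!)=1$. Taking valuations in the displayed identity gives
\[ \vp(|S_p\cdot\mathbf{k}|) \;=\; \vp(p!) - \vp\Big(\prod_v m_v!\Big) \;=\; 1, \]
which is the claim.

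There is no real obstacle here: the argument is a direct application of the orbit--stabilizer theorem together with the elementary fact that $p$ divides $p!$ exactly once while $p\nmid j!$ for $0\le j\le p-1$. The only point requiring a moment's care is the remark that the hypothesis rules out a multiplicity equal to $p$ (which is precisely the case $k_1=\cdots=k_p$, where the orbit is a singleton and the valuation is $0$ instead).
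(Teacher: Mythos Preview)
Your proof is correct and follows essentially the same route as the paper: both use the orbit--stabilizer theorem and show that the stabilizer $H$ of $\mathbf{k}$ has order coprime to $p$, whence $\vp(|S_p\cdot\mathbf{k}|)=\vp(p!)=1$. The only difference is in how $p\nmid |H|$ is established: you identify $H$ explicitly as $\prod_v S_{m_v}$ and note each $m_v\le p-1$, whereas the paper observes that $H$ contains no $p$-cycle (hence no element of order $p$) and invokes Cauchy's theorem.
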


\begin{proof} 
Let $H$ be the stabiliser of $\mathbf{k}$ in $S_p$, so that $|S_p \cdot \mathbf{k}| = |S_p| / |H|$. If $k_i \neq k_j$ for some $i \neq j$, then $H$ cannot contain any $p$-cycle. The only elements of $S_p$ of order $p$ are $p$-cycles, so by Cauchy's Theorem, $\vp(|H|) = 0$. Hence $\vp(|S_p| / |H|) = \vp(|S_p|) = 1$.
\end{proof}

\begin{lemm}
\label{NotMultsOfp^2} 
If $\mathbf{k} \in X_{q n} \setminus q \NN^p$, then $\vp( u_{\mathbf{k}} ) > w(n) - 1$.
\end{lemm}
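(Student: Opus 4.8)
My plan is to establish the uniform estimate $\vp(u_{\mathbf k}) \geq 1/(q-1)$ for \emph{every} $\mathbf k \in \NN^p \setminus q\NN^p$, with no reference to the condition $|\mathbf k| = qn$; since $w(n) < 1 + 1/(q-1)$ by part~(1) of Proposition~\ref{prop17}, this at once gives $\vp(u_{\mathbf k}) \geq 1/(q-1) > w(n) - 1$. As $u_j \in o_{\Cp}$ for all $j \geq 0$, we have $\vp(u_{\mathbf k}) = \sum_i \vp(u_{k_i}) \geq \vp(u_{k_i})$ for each $i$, so it suffices to exhibit one coordinate $k_i$ with $\vp(u_{k_i}) \geq 1/(q-1)$. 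Since $\mathbf k \notin q\NN^p$, some coordinate $k_i$ is not divisible by $q = p^2$; fix such an $i$, so that $\vp(k_i) \in \{0,1\}$.

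If $\vp(k_i) = 0$, then Corollary~\ref{coro114} gives $\vp(u_{k_i}) > 1/p \geq 1/(q-1)$ and we are done. If $\vp(k_i) = 1$, write $k_i = p m_i$ with $p \nmid m_i$ and $m_i \geq 1$. The one new ingredient is a preliminary lemma, which I would prove first: $\vp(u_m) \geq p/(q-1)$ for every $m \geq 1$ with $p \nmid m$. For $m = 1$ this is $\vp(u_1) = \vp(\Omega) = p/(q-1)$, and for $m \geq 2$ it follows from Corollary~\ref{kuk}, since $\vp(m) = 0$ gives
\[ \vp(u_m) = \vp(m u_m) \geq \min\bigl(1,\ \vp(u_1) + \vp(u_{m-1})\bigr) \geq \min\bigl(1,\ p/(q-1)\bigr) = p/(q-1), \]
using $\vp(u_{m-1}) \geq 0$ and $p/(q-1) \leq 1$. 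Now feed $\vp(u_{m_i}) \geq p/(q-1)$ into Corollary~\ref{coro14}: if $\vp(u_{m_i}) \leq 1$ then $\vp(u_{k_i}) = \vp(u_{p m_i}) = \tfrac1p\vp(u_{m_i}) \geq 1/(q-1)$, while if $\vp(u_{m_i}) > 1$ then $\vp(u_{k_i}) = \vp(u_{p m_i}) > 1/p \geq 1/(q-1)$. In every case $\vp(u_{k_i}) \geq 1/(q-1)$, which completes the proof.

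The only load-bearing step is the preliminary bound $\vp(u_m) \geq p/(q-1)$ for $m$ prime to $p$; it matters that we obtain the full $p/(q-1)$ and not merely the $1/p$ of Corollary~\ref{coro114}, since after dividing by $p$ the weaker estimate would give only $\vp(u_{k_i}) > 1/q$, whereas $w(n)-1$ can exceed $1/q$. Everything else is a short case check. The only points needing a moment's care are the elementary inequalities $p/(q-1) \leq 1$ and $1/p \geq 1/(q-1)$, both equivalent to $p^2 - p - 1 \geq 0$ and so valid for all primes $p$; beyond these I do not expect any obstacle.
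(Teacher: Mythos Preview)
Your proof is correct, and it takes a genuinely different route from the paper's. Both arguments begin with the same reduction: by Proposition~\ref{prop17}(1) it suffices to show $\vp(u_{\mathbf k}) \geq 1/(q-1)$, and the case where some $k_i$ is prime to $p$ is dispatched identically via Corollary~\ref{coro114}. The divergence is in the remaining case $\mathbf k = p\mathbf m$ with some $m_i$ prime to $p$. The paper uses the constraint $|\mathbf k| = qn$, hence $|\mathbf m| = pn \equiv 0 \pmod p$, to force \emph{two} coordinates $m_i, m_j$ prime to $p$; the weak bound $\vp(u_{m_i}), \vp(u_{m_j}) > 1/p$ from Corollary~\ref{coro114} then sums to $\vp(u_{\mathbf m}) > 2/p > p/(q-1)$, and Corollary~\ref{coro14} finishes. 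You instead ignore the constraint $|\mathbf k| = qn$ entirely and prove the sharper single-coordinate estimate $\vp(u_m) \geq p/(q-1)$ for $p \nmid m$ directly from Corollary~\ref{kuk} (ultimately from the logarithmic-derivative identity in Proposition~\ref{DivByu1}); one such coordinate then suffices. Your argument is more general---it applies to every $\mathbf k \in \NN^p \setminus q\NN^p$, not only those with $|\mathbf k| = qn$---and arguably cleaner, at the price of importing one extra ingredient (Corollary~\ref{kuk}) that the paper's proof of this lemma does not need. The paper's approach stays entirely within the Frobenius congruences of Proposition~\ref{prop13} and its corollaries.
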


\begin{proof} 
Since $\frac{1}{q-1} > w(n) - 1$ by Proposition \ref{prop17}(1), it is enough to show that
\[ \vp(u_{\mathbf{k}}) > \frac{1}{q-1}.\]
If some $k_i$ is not divisible by $p$, then by Corollary \ref{coro114},
\[\vp(u_{\mathbf{k}}) \geq \vp(u_{k_i}) > \frac{1}{p} > \frac{1}{q-1}.\]
Assume now that for each $i = 1, \ldots, p$, we can write $k_i = p m_i$ for some $m_i \geq 0$ so that $|\mathbf{m}| = \frac{1}{p}|\mathbf{k}| = pn$. Since $\mathbf{k} \notin q \NN^p$ by assumption, we must have $m_i \nequiv 0  \bmod{p}$ for some $i$. Because $|\mathbf{m}| = np \equiv 0  \bmod{p}$, in this case there must be at least two distinct indices $i,j$ such that $m_i \neq 0  \bmod{p}$ and $m_j \neq 0  \bmod{p}$. Using Corollary \ref{coro114} again, we obtain
\[ \vp(u_{\mathbf{m}}) \geq \vp(u_{m_i}) + \vp(u_{m_j}) \geq \frac{2}{p} > \frac{p}{q-1}.\]

Suppose now that $\vp(u_{m_i}) \leq 1$ for all $i$. Then Corollary \ref{coro14}(1) implies that 
\[ \vp(u_{\mathbf{k}}) = \frac{1}{p} \vp(u_{\mathbf{m}})  > \frac{1}{p} \cdot \frac{p}{q-1} = \frac{1}{q-1}. \]
Otherwise, for at least one index $i$ we have $\vp(u_{m_i}) > 1$, and then Corollary \ref{coro14}(2) gives
\[ \vp(u_{\mathbf{k}}) \geq \vp(u_{k_i}) > \frac{1}{p} > \frac{1}{q-1}. \qedhere \]
\end{proof}

We can now prove Theorem A.

\begin{theo}
\label{thA}
We have $\vp(u_n) = w(n)$ for all $n \geq 0$.
\end{theo}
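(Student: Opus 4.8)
The plan is to prove $\vp(u_n) = w(n)$ by strong induction on $n$, using the functional equation of $G(Z)$ modulo $p^2$ packaged in the coefficients $C_n$ of $(1+G(Z))^p$. The base cases $n \le q-1$ are handled by Lemma \ref{rema15}, and more generally whenever $p \nmid n$ we can reduce to smaller arguments. Indeed, if $n = pk + i$ with $1 \le i \le p-1$, then Proposition \ref{Modp^2} expresses $u_n$ modulo $p^2$ in terms of $u_{pk}$, $u_i$ and $u_{p(k-p)+i+1}$; by the inductive hypothesis we know $\vp(u_{pk}) = w(pk)$, $\vp(u_i) = w(i)$, and $\vp(u_{p(k-p)+i+1}) = w(p(k-p)+i+1)$, and one checks with the arithmetic of $w$ in Proposition \ref{prop17} (especially parts (4),(5) and the strict inequalities) that the ``main term'' $\binom{n}{i}^{-1} u_{pk} u_i$ has valuation exactly $w(pk) + w(i) = w(n)$ — using $\vp\binom{n}{i}^{-1} \ge 0$, and in fact $=0$ since $i \le p-1$ — while the correction term $p \zeta_{i,m} u_{p(k-p)+i+1}$ has strictly larger valuation, because $w(p(k-p)+i+1) < w(pk+i) = w(n) < 1 + w(n)$; here Lemma \ref{LastZeta} is needed only to push through the hardest sub-case $i = p-1$, where $\zeta_{p-1,m} \equiv 0 \bmod p$ buys an extra factor of $p$. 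So the crux is the case $p \mid n$.

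Write $n = pn'$. The key identity is that, by Corollary \ref{coro119} and Proposition \ref{prop13}-style reasoning, the coefficient of $Z^{qn}$ in $G([p]_{\LT}(Z)) = [p]_{\Gm}(G(Z)) = (1+G(Z))^p - 1$ is on one hand $\equiv u_n \bmod p^2$ (Corollary \ref{coro119}), and on the other hand equal to $C_n$ (for $n \ge 1$). Thus $u_n \equiv C_n \bmod{p^2}$. Now I would split $C_n = \sum_{\mathbf{k} \in X_{qn}} |S_p \cdot \mathbf{k}|\, u_{\mathbf{k}}$ according to whether $\mathbf{k} \in q\NN^p$ or not. By Lemma \ref{Orbit}, any $\mathbf{k}$ with non-constant entries contributes a factor of $p$; combined with Lemma \ref{NotMultsOfp^2}, every term with $\mathbf{k} \notin q\NN^p$ satisfies $\vp(|S_p \cdot \mathbf{k}|\, u_{\mathbf{k}}) > w(n)$ (when the entries are non-constant we gain $1 + (w(n)-1) = w(n)$; when $\mathbf{k}$ is constant, say all entries equal $k$, then $\mathbf{k} \notin q\NN^p$ forces $p \nmid k$ or a similar obstruction, and one argues directly as in Lemma \ref{NotMultsOfp^2}). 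The terms with $\mathbf{k} \in q\NN^p$, i.e. $k_i = q\ell_i$, have $|S_p \cdot \mathbf{k}| = |S_p \cdot \boldsymbol{\ell}|$ with $|\boldsymbol{\ell}| = n/q$ — but this requires $q \mid n$, so I should further distinguish whether $q \mid n$ or merely $p \mid n$ but $q \nmid n$.

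If $p \mid n$ but $q \nmid n$, then $X_{qn} \cap q\NN^p = \varnothing$ is impossible in general, so the cleaner route is: from $u_{pn'}^p \equiv u_{n'} \bmod{p\MM_{\Cp}}$ (Corollary \ref{ppm}) together with the inductive value $\vp(u_{n'}) = w(n')$ and Corollary \ref{coro14}, we get $\vp(u_{n}) = \frac1p w(n') = w(pn') = w(n)$ as soon as $w(n') \le 1$, i.e. as soon as $n' \not\equiv -1 \bmod q$ (Proposition \ref{prop17}(2)); and if $n' \equiv -1 \bmod q$ then $w(n') > 1$ and Corollary \ref{coro14}(2) only gives $\vp(u_n) > 1/p$, which is not enough — so these remaining $n$ (those with $n = p n'$, $n' \equiv -1 \bmod q$) are exactly the cases that must be extracted from the $C_n$ identity. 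The main obstacle, then, is this last family: here I expect to need the full strength of $u_n \equiv C_n \bmod{p^2}$, isolating the unique ``diagonal-type'' term(s) in $C_n$ of valuation $w(n)$ and showing all others are strictly larger, which forces $\vp(u_n) \le w(n)$; the reverse inequality $\vp(u_n) \ge w(n)$ should come from subadditivity (Proposition \ref{prop17}(6)) applied to $C_n$ together with the inductive hypothesis. Pinning down that the surviving term has valuation exactly $w(n)$ — rather than accidentally larger, which would only give a bound — is the delicate point, and is presumably where the precise recursion $w(p^n k + i) = w(p^n k) + w(i)$ and the non-cancellation of $\binom{\cdot}{\cdot}^{-1}$ factors modulo $p$ get used decisively.
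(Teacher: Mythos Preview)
Your outline is right for $p \nmid n$ and for $n = pn'$ with $w(n') \le 1$, but the ``hard'' case $n = pn'$, $n' \equiv -1 \pmod q$ does not go through as written. You propose to read off $\vp(u_n)$ from $u_n \equiv C_n \bmod p^2$, but the dominant contribution to $C_n$ is the diagonal term $u_{\kbf_0} = u_{pn}^p$, and $pn > n$; the orbit $(0,\ldots,0,qn)$ contributes $p\,u_{qn}$ with $qn > n$ as well. With only the weak hypothesis $\vp(u_m) = w(m)$ for $m < n$ you cannot evaluate either, so the congruence gives no grip on $\vp(u_n)$. Even the remaining terms $\kbf = q\mbf \in q\NN^p$ with all $m_i < n$ require $\vp(u_{pm_i})$, and $pm_i$ may exceed $n$. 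Your suggested lower bound ``$\vp(u_n) \ge w(n)$ from subadditivity'' does not work: subadditivity of $w$ bounds $\sum w(m_i)$ from below by $w(n)$, but passing from $C_n$ back to $u_n$ goes the wrong direction for a lower bound.

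The missing idea is to strengthen the induction to prove $\vp(u_m) = w(m) = p\,\vp(u_{pm})$ simultaneously. Then your hard case disappears: for any $n = mp+i$ the hypothesis at $m < n$ gives $\vp(u_{mp}) = \tfrac1p w(m)$ directly, so Proposition~\ref{Modp^2} (with Lemma~\ref{LastZeta} when $i=p-1$) yields $\vp(u_n) = w(n)$ uniformly, with no case split on $w(n')$. The $C_n$ analysis is then used for the other half of the strengthened claim, namely $\vp(u_{pn}) = \tfrac1p w(n)$: one shows every term of $C_n$ other than $u_{pn}^p$ has valuation $> w(n)$ (the strengthened hypothesis supplies $\vp(u_{pm_i})$ for the $q\NN^p$ terms, and the residual term $p\,u_{qn}$ is handled by rewriting $u_{pn}^p \equiv u_{qn}^q \bmod p^2$ via Corollary~\ref{ppm} and a short case analysis on $\vp(u_{qn})$), whence $\vp(u_n) = w(n)$ forces $\vp(u_{pn}^p) = w(n)$.
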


\begin{proof} 
We prove the stronger statement $\vp(u_n) = w(n) = p \cdot \vp(u_{pn})$ by induction on $n$. The base case $n = 0$ is clear, so assume $n \geq 1$. We first show that $\vp(u_n) = w(n)$.

Write $n = mp + i$ with $0 \leq i \leq p-1$. Then $\vp(u_i) = w(i)$ holds by Lemma \ref{rema15}. Since $n \neq 0$, we must have $m < n$ so $\vp(u_{mp}) = \frac{1}{p} w(m)$ by the inductive hypothesis. Using  (4) and (5) of Proposition \ref{prop17}, we see that
\[\vp(u_i u_{mp}) = \vp(u_i) + \vp(u_{mp}) = w(i) + \frac{1}{p}w(m) = w(pm+i) = w(n).\]
Suppose first that $n \nequiv -1  \bmod{q}$. Then $w(n) < 1$ by Proposition \ref{prop17}(2), which means that $\vp(u_i u_{mp}) = w(n) < 1$. By Proposition \ref{Modp^2}, we have
\[ u_n \equiv \binom{mp+i}{i}^{-1} u_i u_{mp}   \bmod{p}.\]
We have $\binom{mp+i}{i} \equiv 1  \bmod{p}$ by Lucas' theorem, and therefore $\vp(u_n) = w(n)$.

Suppose now that $n \equiv -1  \bmod{q}$. Then $i = p-1$, and Proposition \ref{Modp^2} tells us that
\[u_n \equiv \binom{n}{p-1}^{-1} u_{mp} \cdot u_{p-1} + p\zeta_{p-1,m} \cdot u_{n-q+1}   \bmod{p^2}.\]
We have $\zeta_{p-1,m} \equiv 0  \bmod{p}$ by Lemma \ref{LastZeta}. Hence in fact $u_n \equiv \binom{n}{p-1}^{-1} u_{mp} u_{p-1}  \bmod{p^2}$. Since $\vp(u_{mp}u_{p-1}) = w(n) < 2$ by Proposition \ref{prop17}(1), we again conclude that \[\vp(u_n) = \vp(u_{mp}) + \vp(u_{p-1}) = w(n).\]
To complete the induction step, we must show that $w(n) = p \vp(u_{pn}) = \vp(u_{pn}^p)$. In order to do this, we compare the coefficients of $Z^{qn}$ in the functional equation for $G(Z)$
\[ G([p]_{\LT}(Z)) = [p]_{\mathbf{G}_m}(G(Z)) = (1 + G(Z))^p - 1 \]
modulo $p^2$. Using Corollary \ref{coro119} and Lemma \ref{CnFormula}, we see that
\begin{equation} 
\tag{$\diamond$} 
u_n \equiv C_n  = \sum_{\mathbf{k} \in X_{qn}} |S_p \cdot \mathbf{k}| \hspace{0.1cm} u_{\mathbf{k}}  \bmod{p^2}.
\end{equation}
Define $\kbf_0 := (pn,pn, \cdots, pn)$. We will now proceed to show that in fact 
\begin{equation}
\tag{$\star$} 
\vp( |S_p \cdot \kbf| u_{\kbf}) > w(n) \quad \text{for all} \quad \kbf \in X_{qn} \setminus \{\kbf_0\}.
\end{equation}
Note that $w(n) < 2$ by Proposition \ref{prop17}(1) and that $u_{\mathbf{k}_0} = u_{pn}^p$. Hence congruence $(\diamond)$ together with $(\star)$ imply that $\vp(u_n-u_{np}^p) > w(n)$. Since we already know that $\vp(u_n) = w(n)$ this shows that $\vp(u_{np}^p) = \vp(u_n) = w(n)$ and completes the proof.  

Since at least two entries of $\kbf$ must be distinct when $\kbf \neq \kbf_0$, we have $\vp(|S_p\cdot \kbf|) = 1$ by Lemma \ref{Orbit}, so we're reduced to showing that
\begin{equation}
\tag{$\star\star$} 
\vp( u_{\kbf}) > w(n) - 1 \quad \text{for all} \quad \kbf \in X_{qn}\setminus \{\kbf_0\}.
\end{equation}

Fix $\mathbf{k} \in X_{qn} \setminus \{\mathbf{k}_0\}$. When $\kbf \notin q \NN^p$, $(\star\star)$ is precisely the conclusion of Lemma \ref{NotMultsOfp^2}, so we may assume that $\kbf \in q \NN^p$. Write $\mathbf{k} = q\mathbf{m}$ for some $\mathbf{m} \in \NN^p$, so that $|\mathbf{m}| = \frac{1}{q}|\mathbf{k}| = \frac{qn}{q} = n$. We first consider the case where $m_i < n$ for all $i$, so that by the inductive hypothesis we have $\vp(u_{p m_i}) = w(m_i)/p$. Suppose that $\vp(u_{pm_i}) > 1$ for some $i$. Then by Corollary \ref{coro14}(2) and Proposition \ref{prop17}(1), 
\[ \vp( u_{\kbf}) \geq \vp(u_{k_i}) = \vp(u_{qm_i}) > \frac{1}{p} > \frac{1}{q-1} > w(n) - 1\]
and ($\star\star$) holds. Otherwise, $\vp(u_{pm_i}) \leq 1$ for all $i$ and then by Corollary \ref{coro14}(1) we have 
\[\vp(u_{k_i}) = \vp(u_{qm_i}) = \frac{1}{p} \vp(u_{pm_i}) = \frac{1}{q}w(m_i).\] 
Since $|\mathbf{m}|=n$, Proposition \ref{prop17}(6) gives
\[\vp( u_{\kbf}) \geq \frac{1}{q} \sum w(m_i) \geq \frac{1}{q} \cdot w(n)  > w(n)-1\]
because $w(n) < 1 + 1/(q-1)$ by Proposition \ref{prop17}(1). Hence ($\star\star$) follows. 

We're left with the case where at least one $m_i$ is equal to $n$. But then since $|\mathbf{m}|=n$, all other $m_j$'s are zero and such $\mathbf{m}$'s form a single $S_p$-orbit of size $p$. Hence we have to show $(\star\star)$ holds when $\kbf = (0,0,\cdots,qn)$.

The congruence $(\diamond)$ together with our estimates above implies
\[ \vp(u_n-(u_{n p}^p + p u_{n q})) > w(n). \] 
Now, $u_{n p} \equiv u_{n q}^p \bmod{p}$ by Corollary \ref{ppm} so that $u_{np}^p \equiv u_{n q}^{q} \bmod{p^2} $. Therefore  \[ \vp(u_n-(u_{n q}^{q} + p u_{n q})) > w(n). \] Since we already know that $\vp(u_n) = w(n)$, we get that 
\[\vp(u_{n q}^{q} + p u_{n q}) = w(n).\]

We will now see that $\vp(pu_{nq}) \leq w(n)$ is not possible. Indeed, if $\vp(p u_{n q}) = w(n)$, then $\vp(u_{n q}^{q}) \geq w(n)$ so that $\vp(u_{n q}) \geq w(n)/q$ and $\vp(p u_{n q}) \geq 1 + w(n)/q > w(n)$. And if $\vp(p u_{n q}) < w(n)$ then $\vp(p u_{n q}) = \vp(u_{n q}^{q})$, so $\vp(u_{n q}) = 1/(q-1)$. But then $\vp(pu_{nq}) > 1 + 1/(q-1) > w(n)$ by Proposition \ref{prop17}(1). 

Hence $\vp(pu_{nq}) > w(n)$ after all, which is $(\star\star)$ for $\kbf = (0,0,\cdots,0,qn)$.
\end{proof}

\end{document}